\def\uno{\mathbf{1}}
\def\mpn{\medskip\par\noindent}
\definecolor{ao(english)}{rgb}{0.0, 0.5, 0.0}
\definecolor{brickred}{rgb}{0.8, 0.25, 0.33}
\definecolor{burntorange}{rgb}{0.8, 0.33, 0.0}
\definecolor{beaver}{rgb}{0.62, 0.51, 0.44}
\definecolor{brown(traditional)}{rgb}{0.59, 0.29, 0.0}
\definecolor{ao(english)}{rgb}{0.0, 0.5, 0.0}
\definecolor{verde}{rgb}{0.12, 0.8, 0.17}
\def\rouge{\color{red}}
\newcommand{\rac}{\mathbb Q}
\newcommand{\lae}{\varepsilon}
\newcommand{\Ker}{\mathrm{Ker}}
\newcommand{\mult}{\times\,\_\,}
\theoremstyle{plain}
\newtheorem{teo}{Theorem}[section]
\newtheorem*{teo-non}{Theorem}
\newtheorem{prop}[teo]{Proposition}
\newtheorem{lema}[teo]{Lemma}
\theoremstyle{definition}
\newtheorem{defi}[teo]{Definition}
\newtheorem{nota}[teo]{Notation}
\theoremstyle{remark}
\newtheorem{ejem}[teo]{Example}
\newtheorem{rem}[teo]{Remark}
\def\CC{\mathcal{C}}
\def\CD{\mathcal{D}}
\def\CF{\mathcal{F}}
\def\CL{\mathcal{L}}
\def\CR{\mathcal{R}}
\def\mpn{\medskip\par\noindent}
\def\N{\mathbb{N}}
\def\endpf{~\leaders\hbox to 1em{\hss\  \hss}\hfill~\raisebox{.5ex}{\framebox[1ex]{}}\smallskip\par}
\newcommand\Rres[2]{\mathcal{R}^{#1}_{#2}}
\author{Nadia Romero\footnote{\texttt{nadia.romero@ugto.mx}}\\ 
\begin{small}
Departamento de Matem\'aticas,
\end{small}\\
\begin{small}
Universidad de Guanajuato.
\end{small}
}
\author{Serge Bouc and Nadia Romero}
\title{Koszul resolution for linear monoidal functors}
\date{}
\begin{document}
\maketitle
\begin{abstract}
We introduce regular sequences and associated Koszul resolutions for monoids in the category of functors over  an essentially small linear symmetric monoidal category. Next we define polynomials over such monoids. We compute the Hochschild cohomology functors and prove a relative analogue of Hilbert's syzygy theorem for polynomials over {\em tensor idempotent commutative monoids}.\smallskip\\
{\bf Keywords:} monoidal category, functor category, Koszul resolution,  Hochschild cohomology, Hilbert syzygy.\\
{\bf AMS MSC (2020):} 16Y99, 18D99, 18G10, 18M05, 20J15.
\end{abstract}

\section{Introduction}

Hochschild cohomology is a classical tool to study rings and bimodules over them. It can be generalized to other contexts in various ways: e.g. in~\cite{primero}, the second author introduces Hochschild cohomology for functors over linear symmetric monoidal categories. It is then a natural question to try to generalize to this framework some other notions. In the classical framework of rings, the construction of {\em Koszul resolutions} plays a fundamental r\^ole. Such resolutions can be used in particular to give a proof of the Hilbert syzygy theorem for polynomial rings over a field (see Application 4.5.6 in~\cite{weibel}).\par
In this work, we introduce regular sequences (Definition~\ref{regular sequence}) and associated Koszul resolutions for monoids in  the category $\mathcal{F}$ of functors over an essentially small linear symmetric monoidal category. In Section \ref{koszul}, given a monoid $A$ in $\mathcal{F}$, we define its associated Koszul complex and prove first that it is indeed a complex, and then that it is a resolution for a particular $A$-module, as in the classical case. The definition of the complex as well as the proof of the fact that it is a resolution are completely combinatorial in nature, thus avoiding the use of the tensor product of complexes. 

 In Section \ref{polynomials}, we define polynomials over the monoids in $\mathcal{F}$. In the classical framework of rings, the results we are interested in are proved for polynomial rings over fields so, in our context, we replace the fields by  {\em tensor idempotent commutative monoids} (see Definition~\ref{tensor idempotent}). With this, we compute the Hochschild cohomology functors and prove a relative analogue of Hilbert's syzygy theorem for polynomials over tensor idempotent commutative monoids in $\mathcal{F}$.\par

An important (and motivating) example where our results apply is the context of Green biset functors. In this case, the Burnside functor is commutative and tensor idempotent, so our results can be applied to polynomials over it.  

\section{Preliminaries}
\label{prems}

In what follows $R$ is a commutative ring with identity, denoted simply as $1$,  and $(\mathcal{X},\, \diamond,\, \uno,\,\alpha,\, \lambda,\, \rho,\, s)$ is an essentially small symmetric monoidal category enriched in $R$-Mod, in particular,  the functor $\,\_\,\diamond \,\_\, : \mathcal{X}\times \mathcal{X}\rightarrow \mathcal{X}$ is $R$-bilinear. The category of $R$-linear functors from $\mathcal{X}$ to $R$-Mod is denoted by $\mathcal{F}$, it is an abelian, symmetric monoidal, closed category with identity given by $I=\mathcal{X}(\uno,\,\_\,)$. The complete notation for $\mathcal{F}$ is $(\mathcal{F},\, \otimes,\, I,\, \alpha^{\mathcal{F}},\, \lambda^{\mathcal{F}},\, \rho^{\mathcal{F}},\, S)$.

\begin{ejem}
\label{ejemplos}
There are many examples of categories satisfying the conditions of $\mathcal{X}$, we mention two of them. 
\begin{itemize}
\item[i)]  The  \textit{biset category}, defined in \cite{biset}.
\item[ii)] The category of correspondences, defined in \cite{corr2}.
\end{itemize}
\end{ejem}

In the first case, the monoids in $\mathcal{F}$ are called Green biset functors and have been extensively studied in recent years. See, for example, \cite{lachica}. In the second case, monoids have been considered in Section 7 of~\cite{corfun-tensor}, under the name of {\em algebra functors}. It is shown there in particular that to any finite lattice $T$, one can associate a monoid $F_T$, and that conversely, any monoid satisfying some additional mild conditions is obtained in this way (Theorem 7.4 of~\cite{corfun-tensor}).\medskip\par
Recall that, given a monoid $A$ in $\mathcal{F}$, we denote its product $A\otimes A\rightarrow A$ as $\mu_A$ and its identity morphism $I\rightarrow A$ as $e_A$. For more notation and basic facts on monoids in~$\mathcal{F}$, and modules over them, we invite the reader to take a look at \cite{primero}.

\begin{defi}
Given a monoid $A$ in $\mathcal{F}$ and $C$ a subfunctor of $A$, we say that $C$ is a \textit{submonoid} of $A$ if $C$ is a monoid with the monoid structure of $A$. 
\end{defi}

The previous definition translates in the following way. Since $C$ is a subfunctor of $A$, we have a canonical morphism $i:C\rightarrow A$ in $\mathcal{F}$, so $C$ is a submonoid of $A$  if $\mu_A\circ(i\otimes i)$ has image in $C$ and $i\circ e_C=e_A$.

An important example of submonoid is the following.

\begin{defi}
Let $A$ be a monoid in $\mathcal{F}$. The commutant of $A$ at $x\in \mathcal{X}$ is
\begin{displaymath}
CA(x)=\{b\in A(x)\mid a\times b=A(s_{x\diamond y}^{y\diamond x})(b\times a), \forall a\in A(y), \forall y\in \mathcal{X}\},
\end{displaymath} 
where $s_{x\diamond y}^{y\diamond x}:x\diamond y\rightarrow y\diamond x$ is the symmetry in $\mathcal{X}$. It is a subfunctor of $A$ and, moreover, it is a \textit{commutative} submonoid of $A$.
\end{defi}

If $A$ and $D$ are monoids in $\mathcal{F}$ and $f:D\rightarrow A$ is a morphism of monoids, we have a \textit{restriction functor along $f$}, from $A$-Mod to $D$-Mod, which is an additive functor. When considering the arrow $e_{A}:I\rightarrow A$, the restriction from $A$-Mod to $\mathcal{F}$ is simply denoted as $\mathcal{R}_A$. Now, if $C$ is a submonoid of $A$, the restriction from $A$-Mod to $C$-Mod is denoted by $\mathcal{R}^A_C$. Since $\mathcal{R}^A_C$ is a forgetful functor, it is faithful and exact. This is the case of the functor $\mathcal{R}_A$ too.

Let $F$ be a functor in $\mathcal{F}$. Recall that, given a family $\mathcal{S}=(F_i)_{i\in I}$ of subfunctors of $F$, the intersection $H=\bigcap_{i\in I}F_i$ of $\mathcal{S}$  is the subfunctor of $F$ given by
\begin{displaymath}
H(x)=\bigcap_{i\in I}(F_i(x)),\quad H(\varphi)=F(\varphi)|_{H(x)}, 
\end{displaymath} 
for $x$ and $y$ objects and $\varphi:x\rightarrow y$ an arrow in $\mathcal{X}$.

\begin{rem}
Let $A$ be a monoid in $\mathcal{F}$ and $M$ be an $A$-module. If $\mathcal{S}$ is a family of submodules of $M$, then the intersection of $\mathcal{S}$ is a submodule of $M$.
\end{rem}

\begin{defi}
Let $F$ be a functor in $\mathcal{F}$. Given $\mathcal{G}$, a set of objects in $\mathcal{X}$, and for each $x\in \mathcal{G}$, a subset $\Gamma_x\subseteq F(x)$, the subfunctor generated by the data $(\mathcal{G}, \Gamma)$ is the intersection of the family
\begin{displaymath}
\mathcal{S}=\{F'\leq F\mid \Gamma_x\subseteq F'(x),\forall x\in \mathcal{G}\}.
\end{displaymath}
\end{defi}

\begin{defi}
Let $A$ be a monoid in $\mathcal{F}$ and $N$ be an $A$-module. Consider $\mathcal{G}$ and $\Gamma$ as before and
\begin{displaymath}
\mathcal{S}'=\{N'\leq N\mid \Gamma_x\subseteq N'(x),\forall x\in \mathcal{G}\}.
\end{displaymath}
We call the intersection of $\mathcal{S}'$ \textit{the submodule of $N$ generated by the data $(\mathcal{G}, \Gamma)$}.
\end{defi}

\begin{rem}
If $M$ is the submodule generated by $(\mathcal{G}, \Gamma)$, then $\Gamma_x\subseteq M(x)$ for all $x$ in~$\mathcal{G}$.
\end{rem}

\begin{lema}
\label{generado}
Let $A$ be a monoid in $\mathcal{F}$ and $\alpha_1,\ldots, \alpha_n$ be elements in $A(\uno)$. Let $M$ be the $A$-submodule of $A$ generated by $\alpha_1,\ldots, \alpha_n$. Then, for an object $x$ in $\mathcal{X}$,
\begin{displaymath}
M(x)=\{A(\rho_x)(a_1\times \alpha_1+\ldots +a_n\times \alpha_n)\mid a_i\in A(x)\}.
\end{displaymath}
\end{lema}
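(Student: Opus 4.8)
The strategy is the standard one for "explicit description of a generated submodule": show that the set on the right-hand side, call it $M'(x)$, defines a subfunctor $M'$ of $A$, that $M'$ is actually an $A$-submodule, that it contains each $\alpha_i$, and finally that it is contained in every $A$-submodule containing the $\alpha_i$; by the definition of the submodule generated by $(\{\uno\},\Gamma)$ with $\Gamma_{\uno}=\{\alpha_1,\dots,\alpha_n\}$ this forces $M'=M$. The element $A(\rho_x)(a_1\times\alpha_1+\cdots+a_n\times\alpha_n)$ should be read as: take $a_i\in A(x)$ and $\alpha_i\in A(\uno)$, form $a_i\times\alpha_i\in (A\otimes A)(x\diamond\uno)$, push through $\mu_A$ to land in $A(x\diamond\uno)$, then apply $A(\rho_x)$ to transport to $A(x)$ using the right unit isomorphism $\rho_x:x\diamond\uno\to x$ of $\mathcal{X}$. (One should state this unpacking at the start, since the displayed formula compresses the multiplication $\mu_A$ and the evaluation pairing into the notation $a\times\alpha$.)

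First I would check that $M'$ is a subfunctor: given $\varphi:x\to y$ in $\mathcal{X}$, naturality of $\mu_A$ and of $\rho$ (the latter being a natural transformation $\_\diamond\uno\Rightarrow\mathrm{Id}$) gives $A(\varphi)\bigl(A(\rho_x)(a_1\times\alpha_1+\cdots)\bigr)=A(\rho_y)\bigl((A(\varphi)(a_1))\times\alpha_1+\cdots\bigr)$, so $M'(y)\supseteq A(\varphi)(M'(x))$, and $M'$ is closed under the maps $A(\varphi)$. Each $M'(x)$ is visibly an $R$-submodule of $A(x)$ by $R$-bilinearity of $\times$. Next, that $M'$ is an $A$-submodule: the action of $b\in A(y)$ on $m=A(\rho_x)(\sum_i a_i\times\alpha_i)\in M'(x)$ produces (using associativity of $\mu_A$, the coherence relating $\alpha^{\mathcal{F}}$, $\rho$ and the associativity constraint in $\mathcal{X}$) an element of the form $A(\rho_{y\diamond x})(\sum_i (b\times a_i)\times\alpha_i)$ up to the associativity isomorphism $y\diamond(x\diamond\uno)\cong (y\diamond x)\diamond\uno$; rewriting $b\times a_i$ as an element of $A(y\diamond x)$ via $\mu_A$ shows this lies in $M'(y\diamond x)$, which is the module-action requirement. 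Containment of the $\alpha_i$: take $a_i=e_A(\text{the canonical generator of }I(\uno))$ appropriately, so that $a_i\times\alpha_i$ collapses under $\mu_A\circ(e_A\otimes\mathrm{id})$ and the unit coherence to $\alpha_i$ itself after applying $A(\rho_{\uno})$ — i.e. one uses the left unit axiom $\mu_A\circ(e_A\otimes\mathrm{id}_A)=\lambda_A^{\mathcal{F}}$ and the compatibility of $\lambda^{\mathcal{F}}$, $\rho^{\mathcal{F}}$ at $\uno$.

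Finally, minimality: if $N'\le A$ is any $A$-submodule with $\alpha_i\in N'(\uno)$ for all $i$, then for any $a_i\in A(x)$ the element $a_i\cdot\alpha_i$ (module action of $a_i$ on $\alpha_i\in N'(\uno)$, landing in $N'(x\diamond\uno)$) lies in $N'$, hence so does $A(\rho_x)$ of the sum — but that is exactly a general element of $M'(x)$, once we observe that the module action $A(x)\otimes N'(\uno)\to N'(x\diamond\uno)$ is the restriction of $\mu_A$, so "$a_i\cdot\alpha_i$" and "$a_i\times\alpha_i$" agree under $i:N'\hookrightarrow A$. Therefore $M'\le N'$, and since $M'$ is itself such a submodule, $M'=M$. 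I expect the main obstacle to be purely bookkeeping with the coherence isomorphisms $\alpha^{\mathcal{F}},\lambda^{\mathcal{F}},\rho^{\mathcal{F}}$ and their counterparts $\alpha,\lambda,\rho$ in $\mathcal{X}$ — in particular verifying the module-action step cleanly, where the associativity constraint in $\mathcal{X}$ must be threaded through $A(\rho_{-})$; none of it is deep, but it is the place where a careless computation would go wrong, so I would isolate it as the one diagram to draw explicitly.
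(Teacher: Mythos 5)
Your proposal is correct and follows essentially the same route as the paper: show that the right-hand side defines an $A$-submodule of $A$ containing the $\alpha_i$ (using naturality of $\rho$ for functoriality and the coherence $y\diamond\rho_x$ versus $\rho_{y\diamond x}$ for the module action), and that it sits inside any submodule containing the $\alpha_i$, so it equals $M$. The paper phrases the second half as the single inclusion $F_x\subseteq M(x)$ rather than minimality over all $N'$, but that is the same argument.
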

\begin{proof}
Let 
\begin{displaymath}
F_x=\{A(\rho_x)(a_1\times \alpha_1+\ldots +a_n\times \alpha_n)\mid a_i\in A(x)\}.
\end{displaymath}
Since, by the previous remark, $\alpha_1,\ldots, \alpha_n\in M(\uno)$ and $M$ is an $A$-submodule of $A$, we clearly have $F_x\subseteq M(x)$. 

Let us show that $F(x)=F_x$ defines an $A$-submodule of $A$ such that $\alpha_1,\ldots, \alpha_n\in F(\uno)$. With this we will have that $M$ is a submodule of $F$ and, hence, $M(x)\subseteq F_x$. 
It is easy to see that $F(x)$ is an $R$-submodule of $A(x)$ and clearly $\alpha_1,\ldots, \alpha_n\in F(\uno)$. Also, if $\varphi:x\rightarrow y$ is an arrow in $\mathcal{X}$,  we know that $A(\varphi\circ\rho_x)=A(\rho_x\circ(\varphi\diamond \uno))$ so, $A(\varphi)\left(A(\rho_x)(a_1\times \alpha_1+\ldots +a_n\times \alpha_n)\right)$ is equal to
\begin{displaymath}
A(\rho_x)\left(A(\varphi)(a_1)\times \alpha_1+\ldots +A(\varphi)(a_n)\times \alpha_n\right).
\end{displaymath}
Finally, for an object $y$ in $\mathcal{X}$, $m\in A(y)$ and any $n\in A(x\times \uno)$, we have
\begin{displaymath}
m\times A(\rho_x)(n)=A(y\diamond \rho_x)(m\times n)=A(\rho_{y\diamond x})(m\times n). 
\end{displaymath}
This shows that $F$ is an $A$-submodule of $A$.
\end{proof}

\begin{nota}
The functor $M$ of the previous lemma will be denoted by $A\langle \alpha_1,\ldots ,\alpha_n\rangle$, or simply as $A\langle\alpha\rangle$, where  $\alpha=(\alpha_1,\ldots, \alpha_n)$.
\end{nota}

\section{The Koszul complex}
\label{koszul}

In what follows $A$ is a monoid in $\mathcal{F}$. 

\begin{defi}
For an element $a\in A(\uno)$, we define  $L_a:A\rightarrow A$  as
\begin{displaymath}
L_{a, x}:A(x)\rightarrow A(x)\quad b\mapsto A(\lambda_x)(a\times b),
\end{displaymath}
for $x$ an object in $\mathcal{X}$. For convenience, we will often denote $L_a$ simply as $a\mult$.
\end{defi}
This defines a natural transformation in $A$. Furthermore, if $a\in CA(\uno)$, then $L_a$ is clearly a morphism of $A$-modules.

\begin{nota}
Given a natural number $n>0$, we denote by $A^n$ the coproduct $\oplus A$ of $A$ with itself $n$ times. When considering the binomial coefficient, $\binom{n}{p}$, for $0\leq p\leq n$, we will write $K_p^n(A)$ for
\begin{displaymath}
A^{\binom{n}{p}}=\bigoplus_{S\subseteq T\, |S|=p} A, 
\end{displaymath}
where $T=\{1,\ldots, n\}$. When necessary, we will write $A_S$ for the summand of $K_p^n(A)$ corresponding to the set $S$.
\end{nota}

To define the Koszul complex, we fix a natural number $n>1$ and consider a family of elements $\alpha_1,\ldots ,\alpha_n$ in $CA(\uno)$. We fix these elements and abbreviate them as $\alpha=(\alpha_1,\ldots, \alpha_n)$. 

We begin by defining, for $p\leq n$, the arrow $d:K_p^n(A)\rightarrow K_{p-1}^n(A)$,
\begin{displaymath}
d: \bigoplus_{S\subseteq T\, |S|=p} A\longrightarrow \bigoplus_{S\subseteq T\, |S|=p-1} A.
\end{displaymath}
Given $S=\{i_1,\ldots ,i_p\}\subseteq T$, the arrow $d_S:A_S\rightarrow K_{p-1}^n(A)$ is given by
\begin{displaymath}
d_S=\sum_{k=1}^p(-1)^{k+1}(\alpha_{i_k}\mult)_{\hat{i_k}},
\end{displaymath}
where the {subscript} $\hat{i_k}$ indicates that $\alpha_{i_k}\mult$ has as codomain $A_{S\setminus\{i_k\}}$. In the case $p=1$, the arrow $d:A^n\rightarrow A$ is equal to $\oplus_{i=1}^n d_{\{i\}}$ and each $d_{\{i\}}$ is just $L_{\alpha_i}:A\rightarrow A$, so we denote $d$ as $L_{\alpha}$.

\begin{defi}
For $A$ and $\alpha$ as before, we define $K_A(\alpha)$ as the sequence in $\mathcal{F}$
\[
\xymatrix{
0\ar[r] &K_n^n(A)\ar[r]^-d&\ldots\,K_p^n(A)\ar[r]^-d&K_{p-1}^n(A)\ar[r]^-d&\ldots\, A^n\ar[r]^-{L_{\alpha}}&A\ar[r]&0,
}
\]
with $A$ in degree 0. As we will see next, this sequence is a complex of $A$-modules. We call it the Koszul complex of $A$ for $\alpha$.
\end{defi}

Even though some of the properties of $K_A(\alpha)$ follow in a similar way as in the classical case, we prove them for the sake of completeness.

\begin{lema}
\label{escomple}
With $A$ and $\alpha$ as before, $K_A(\alpha)$ is a complex of $A$-modules.
\end{lema}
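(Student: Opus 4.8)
The goal is to show $d\circ d = 0$ at every spot, i.e. that the composite $K_p^n(A)\to K_{p-1}^n(A)\to K_{p-2}^n(A)$ vanishes for each $p\geq 2$. Since all the functors involved are coproducts of copies of $A$ indexed by subsets of $T=\{1,\dots,n\}$, and the differential is defined summand-by-summand via the maps $d_S$, it suffices to fix a summand $A_S$ with $|S|=p$, compute $d\circ d_S$, and check that the coefficient of each map into each summand $A_{S''}$ with $|S''|=p-2$ is zero. The only such $S''$ that can receive a nonzero contribution are those of the form $S\setminus\{i,j\}$ with $i,j\in S$ distinct; the composite map $A_S\to A_{S''}$ is then (up to sign) $L_{\alpha_i}\circ L_{\alpha_j}$ arising from removing $j$ then $i$, plus $L_{\alpha_j}\circ L_{\alpha_i}$ arising from removing $i$ then $j$.

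Concretely, writing $S=\{i_1<\dots<i_p\}$, the standard bookkeeping of signs — exactly as in the classical Koszul complex — shows that the two paths reaching $A_{S\setminus\{i_k,i_\ell\}}$ (say $k<\ell$) come with opposite signs $(-1)^{k+1}(-1)^{\ell}$ and $(-1)^{\ell+1}(-1)^{k}$ (the shift in the second index because removing $i_k$ first lowers the position of $i_\ell$ by one). So the contribution to $A_{S\setminus\{i_k,i_\ell\}}$ is
\[
(-1)^{k+\ell}\bigl(L_{\alpha_{i_k}}\circ L_{\alpha_{i_\ell}} - L_{\alpha_{i_\ell}}\circ L_{\alpha_{i_k}}\bigr).
\]
Thus everything reduces to the identity $L_{\alpha_i}\circ L_{\alpha_j} = L_{\alpha_j}\circ L_{\alpha_i}$ as endomorphisms of $A$, for $\alpha_i,\alpha_j\in CA(\uno)$. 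I would prove this by unwinding the definition of $L_a$: for $b\in A(x)$, $L_{\alpha_i}L_{\alpha_j}(b) = A(\lambda_x)\bigl(\alpha_i\times A(\lambda_x)(\alpha_j\times b)\bigr)$, and using naturality to pull $A(\lambda_x)$ out through $\alpha_i\times(-)$, this becomes $A(\lambda_{\uno\diamond x}\circ(\uno\diamond\lambda_x))(\alpha_i\times\alpha_j\times b)$ (suitably associated). Since $\alpha_j\in CA(\uno)$, we have $\alpha_i\times\alpha_j = A(s^{\uno\diamond\uno}_{\uno\diamond\uno})(\alpha_j\times\alpha_i)$, i.e. the two products over $\uno\diamond\uno$ agree after applying the symmetry; feeding this in and checking that the resulting structural isomorphisms of $\mathcal{X}$ match up (coherence: $\lambda$, $s$ at $\uno$, and the triangle/hexagon axioms) yields $L_{\alpha_i}L_{\alpha_j}(b) = L_{\alpha_j}L_{\alpha_i}(b)$.

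The main obstacle is precisely this last coherence computation: one must be careful that $\alpha_i,\alpha_j$ live in $A(\uno)$ (not $A(x)$), so the "commutant" relation is applied at the object $\uno$ and involves the symmetry $s_{\uno\diamond\uno}$, and then one has to reconcile the various instances of $\lambda$ (the left unitor) that appear when forming $L_{\alpha_i}L_{\alpha_j}$ versus $L_{\alpha_j}L_{\alpha_i}$ — these are equal only after invoking the coherence theorem for symmetric monoidal categories (or, more elementarily, the compatibility of $\lambda$ with $s$ and associativity). Once this commutativity lemma is in hand, the sign cancellation is the routine classical argument and the proof is complete. I would likely isolate "$L_{\alpha_i}$ and $L_{\alpha_j}$ commute for $\alpha_i,\alpha_j\in CA(\uno)$" as a small preliminary observation before starting the sign bookkeeping, so that the body of the proof of Lemma~\ref{escomple} is purely combinatorial.
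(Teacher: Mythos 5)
Your proposal is correct and follows essentially the same route as the paper: fix a summand $A_S$, track the two paths into each $A_{S\setminus\{i_k,i_\ell\}}$, and cancel them using the commutativity of $L_{\alpha_{i_k}}$ and $L_{\alpha_{i_\ell}}$ coming from $\alpha_i\in CA(\uno)$ (the paper phrases this as identifying both composites with the single arrow $((\alpha_{i_e}\times\alpha_{i_f})\mult)_{\hat{i_e},\hat{i_f}}$, leaving the coherence details you spell out implicit). The only blemish is that the two sign products you quote, $(-1)^{k+1}(-1)^{\ell}$ and $(-1)^{\ell+1}(-1)^{k}$, are literally equal rather than opposite --- the second should read $(-1)^{\ell+1}(-1)^{k+1}$ --- but your displayed commutator formula is the correct one, so this is only a typo.
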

\begin{proof}
Since $\alpha_1,\ldots ,\alpha_n$ are in $CA(\uno)$, the sequence $K_A(\alpha)$ is clearly in the category of $A$-modules.

Next, we prove $d\circ d=0$. 

Let us fix $S=\{i_1,\ldots, i_p\}$. If we start by $d_S=\sum_{k=1}^p(-1)^{k+1}(\alpha_{i_k}\mult)_{\hat{i_k}}$, then in $d\circ d$, for each $k$  we need only to consider the arrow $d_{S\setminus\{i_k\}}$. Now,
\begin{displaymath}
d_{S\setminus\{i_k\}}=\sum_{j=1}^{k-1}(-1)^{j+1}(\alpha_{i_j}\mult)_{\hat{i_k},\hat{i_j}}+\sum_{j=k+1}^{p}(-1)^j(\alpha_{i_j}\mult)_{\hat{i_k},\hat{i_j}},
\end{displaymath}
where $\hat{i_k},\hat{i_j}$ indicates that $\alpha_{i_j}\mult$ has codomain $A_{S\setminus\{i_k, i_j\}}$. Hence, the composition $d\circ d_S$ can be written as
\begin{displaymath}
\sum_{k=1}^p\left(\sum_{j=1}^{k-1}(-1)^{k+j}((\alpha_{i_j}\times\alpha_{i_k})\mult)_{\hat{i_j},\hat{i_k}}+\sum_{j=k+1}^{p}(-1)^{k+j+1}((\alpha_{i_j}\times\alpha_{i_k})\mult)_{\hat{i_j},\hat{i_k}}\right),
\end{displaymath}
since clearly $(\alpha_{i_j}\mult)_{\hat{i_j},\hat{i_k}}\circ (\alpha_{i_k}\mult)_{\hat{i_k}}$ is equal to $((\alpha_{i_j}\times\alpha_{i_k})\mult)_{\hat{i_j},\hat{i_k}}$. 

Finally, let $e$ and $f$ be in $\{1,\ldots, p\}$ and, without loss of generality, suppose $e<f$.  Then, in the sum above, for $k=e$ and $j=f$, we find $(-1)^{e+f+1}((\alpha_{i_f}\times\alpha_{i_e})\mult)_{\hat{i_f},\hat{i_e}}$, and for $k=f$ and $j=e$, we find $(-1)^{f+e}((\alpha_{i_e}\times\alpha_{i_f})\mult)_{\hat{i_e},\hat{i_f}}$. These are the only two times we find the arrow $((\alpha_{i_e}\times\alpha_{i_f})\mult)_{\hat{i_e},\hat{i_f}}$ in the sum above, and all the summands are of this form,  hence $d\circ d_S=0$ and $d\circ d=0$.
\end{proof}

\subsection*{The Koszul resolution}

For this section we follow the lines of Matsumura \cite{matsu}.

\begin{defi}\label{regular sequence}
For an $A$-module $M$, an element $a\in CA(\uno)$ is called $M$-regular if, for any object $x$ in $\mathcal{X}$ and any $m\in M(x)$,
\begin{displaymath}
m\neq 0\Rightarrow a\times m\neq 0.
\end{displaymath}
{The (ordered) sequence $\alpha=(\alpha_1,\ldots, \alpha_n)$, with $\alpha_i\in CA(\uno)$, is called {\em regular} if}
\begin{enumerate}
\item $\alpha_1$ is $A$-regular, $\alpha_2$ is $\left(A/(A\langle\alpha_1\rangle)\right)$-regular, ..., $\alpha_n$ is $(A/A\langle\alpha_1,\ldots, \alpha_{n-1}\rangle))$-regular.
\item $A/(A\langle\alpha\rangle)\neq 0$.
\end{enumerate}
\end{defi}


\def\ideal{I_\alpha}
\begin{teo}
\label{esreso}
If $\alpha=(\alpha_1,\ldots, \alpha_n)$, with $\alpha_i\in CA(\uno)$, is a regular sequence for $A$, then $K_A(\alpha)$ is a resolution for $A/\ideal$, with $\ideal=A\langle\alpha\rangle$. That is, the following sequence is an exact sequence of $A$-modules, 
\[
\xymatrix{
0\ar[r] &K_n^n(A)\ar[r]^-d&\ldots\,\ar[r]^-d&K_{2}^n(A)\ar[r]^-d& {K_1^n(A)}\ar[r]^-{L_{\alpha}}&A\ar[r]&A/\ideal\ar[r]&0.
}
\]
\end{teo}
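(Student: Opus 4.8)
I would argue by induction on the length $n$ of the sequence, following the strategy of Matsumura but keeping everything at the level of the explicit complex $K_A(\alpha)$, so that no tensor product of complexes is needed. Write $K_A(\alpha)$ for the unaugmented complex $0\to K_n^n(A)\to\cdots\to K_1^n(A)\xrightarrow{L_\alpha}A\to 0$ placed in homological degrees $0,\dots,n$. Since $A\to A/I_\alpha$ is surjective, the asserted exactness is equivalent to the two statements $H_0(K_A(\alpha))\cong A/I_\alpha$ and $H_i(K_A(\alpha))=0$ for all $i\ge 1$. I would dispose of $H_0$ once and for all, uniformly in $n$: we have $H_0(K_A(\alpha))=\mathrm{coker}(L_\alpha\colon A^n\to A)$ and $\Im(L_\alpha)=\sum_i\Im(L_{\alpha_i})$, and using that each $\alpha_i$ lies in $CA(\uno)$ together with the coherence identity $\lambda_x\circ s=\rho_x$ (which gives $L_{\alpha_i,x}(a)=A(\lambda_x)(\alpha_i\times a)=A(\rho_x)(a\times\alpha_i)$) one identifies $\Im(L_\alpha)(x)$ with $\{A(\rho_x)(a_1\times\alpha_1+\cdots+a_n\times\alpha_n)\mid a_i\in A(x)\}$, which is precisely $(A\langle\alpha\rangle)(x)=I_\alpha(x)$ by Lemma~\ref{generado}. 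Hence $H_0(K_A(\alpha))=A/I_\alpha$ and the augmented sequence is automatically exact at $A$; it remains to prove $H_i(K_A(\alpha))=0$ for $i\ge 1$.

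For the base case $n=1$ the complex is $0\to A\xrightarrow{L_{\alpha_1}}A\to 0$, so $H_1=\Ker(L_{\alpha_1})$; since $A(\lambda_x)$ is invertible, $L_{\alpha_1,x}(m)=0$ forces $\alpha_1\times m=0$, and $A$-regularity of $\alpha_1$ then gives $m=0$. For the inductive step, fix $n\ge 2$ and put $\alpha'=(\alpha_1,\dots,\alpha_{n-1})$; this is again a regular sequence, since clause (1) of Definition~\ref{regular sequence} is inherited verbatim and clause (2) holds because $A/A\langle\alpha'\rangle$ surjects onto $A/A\langle\alpha\rangle\neq 0$. The combinatorial heart of the argument is that splitting each $K_p^n(A)=\bigoplus_{|S|=p}A_S$ according to whether $n\in S$ exhibits $\bigoplus_{n\notin S}A_S$ as a subcomplex isomorphic to $K_A(\alpha')$ --- because for $n\notin S$ the differential $d_S$ never produces a summand indexed by a set containing $n$, and the signs are literally those defining $K_A(\alpha')$ --- while on the complementary graded piece $\bigoplus_{n\in S}A_S$, after the identification $A_S\leftrightarrow A_{S\setminus\{n\}}$, the induced differential becomes that of $K_A(\alpha')$ shifted up by one degree (here one uses that $n$ is the largest index, so deleting $n$ leaves the sign $(-1)^{k+1}$ attached to $\alpha_{i_k}\mult$ for $i_k\ne n$ unchanged). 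Thus there is a short exact sequence of complexes $0\to K_A(\alpha')\to K_A(\alpha)\to K_A(\alpha')[1]\to 0$.

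I would then feed this into the long exact homology sequence, which in degree $p$ reads
\[
\cdots\to H_p(K_A(\alpha'))\to H_p(K_A(\alpha))\to H_{p-1}(K_A(\alpha'))\xrightarrow{\ \partial\ }H_{p-1}(K_A(\alpha'))\to\cdots
\]
By the induction hypothesis $H_i(K_A(\alpha'))=0$ for $i\ge 1$ and $H_0(K_A(\alpha'))=A/A\langle\alpha'\rangle$. For $p\ge 2$ both neighbouring terms vanish, so $H_p(K_A(\alpha))=0$. For $p=1$ the sequence becomes $0\to H_1(K_A(\alpha))\to A/A\langle\alpha'\rangle\xrightarrow{\partial}A/A\langle\alpha'\rangle$, so it suffices to prove $\partial$ injective, and for that I would identify it explicitly: chasing the connecting map, a degree-$1$ cycle of $K_A(\alpha')[1]$ is an element $a$ of the summand $A_{\{n\}}$ of $K_1^n(A)$, and $d_{\{n\}}(a)=L_{\alpha_n}(a)$ already lies in the degree-$0$ term $A$ of the subcomplex, so $\partial$ is, up to sign, the endomorphism of the $A$-module $A/A\langle\alpha'\rangle$ induced by $L_{\alpha_n}$, i.e. multiplication by $\alpha_n$. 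Since $A(\lambda_x)$ is invertible and $\alpha_n$ is $(A/A\langle\alpha'\rangle)$-regular, this endomorphism is injective, whence $H_1(K_A(\alpha))=0$. This closes the induction.

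\textbf{Main obstacle.} I expect the real work to lie not in the homological algebra --- once the short exact sequence of complexes is in place the long exact sequence does the rest --- but in establishing that very sequence: one must check carefully, with the correct Koszul signs and the correct matching of the summands $A_S$, that the ``$n\in S$ versus $n\notin S$'' decomposition is compatible with the differentials and that the quotient really is $K_A(\alpha')$ shifted by one, and then one must pin down the connecting homomorphism precisely enough to recognise it as multiplication by $\alpha_n$ on $A/A\langle\alpha'\rangle$. The remaining ingredients --- invertibility of the coherence isomorphisms $\lambda_x,\rho_x$, Lemma~\ref{generado}, and the two clauses of the definition of a regular sequence --- enter only in routine ways.
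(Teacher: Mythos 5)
Your proposal is correct and follows essentially the same route as the paper: induction on $n$ via the decomposition of $K_p^n(A)$ according to whether $n\in S$, yielding the short exact sequence of complexes $0\to K_A(\alpha')\to K_A(\alpha)\to K_A(\alpha')[1]\to 0$, identification of the connecting homomorphism with $\pm L_{\alpha_n}$, and the regularity hypotheses to kill $H_1$; the identification $\Im L_\alpha=I_\alpha$ via Lemma~\ref{generado} is also the paper's argument. Your two small additions --- checking that $\alpha'$ is again a regular sequence and spelling out the $\lambda_x$ versus $\rho_x$ coherence point --- are details the paper leaves implicit, not a different method.
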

\begin{proof}
For this proof we will use the notation $\alpha^i=(\alpha_1,\ldots , \alpha_i)$, for $1<i<n$.

We begin by noticing that, for $1\leq p\leq n$, we have the following decomposition of $A$-modules, given by Pascal's rule,
\begin{equation}\label{decomposition}
K_p^n(A)=K_p^{n-1}(A)\oplus K_{p-1}^{n-1}(A).
\end{equation}
 We denote by $\iota$ the embedding  $K_p^{n-1}(A)\hookrightarrow K_p^n(A)$, and by $\tau$ the projection  $K_p^n(A)\twoheadrightarrow K_{p-1}^{n-1}(A)$.\par
Let us see what happens with {the decomposition~(\ref{decomposition})} in the Koszul complex. We write $T=T'\cup \{n\}$ with $T'=\{1,\ldots , n-1\}$. Then, in $K_p^n(A)$, we can identify the two summands in the following way
\begin{displaymath}
K_p^{n-1}(A)=\bigoplus_{S'\subseteq T'\, |S'|=p}\!\!A\quad \textrm{and}\quad K_{p-1}^{n-1}(A)=\bigoplus_{\substack{S=S'\cup\{n\}\\S'\subseteq T'\, |S'|=p-1}}\!\!A.
\end{displaymath}
The arrow $d:K_p^n(A)\rightarrow K_{p-1}^n(A)$ remains the same when restricted to $K_p^{n-1}(A)$, that is, $d|_{K_p^{n-1}(A)}$ is actually
\begin{displaymath}
d:K_p^{n-1}(A)\rightarrow K_{p-1}^{n-1}(A).
\end{displaymath}
Now we suppose $p>1$ and restrict $d$ to $K_{p-1}^{n-1}(A)$. In $A_S$, for a set $S=S'\cup\{n\}$ with $S'=\{i_1,\ldots ,i_{p-1}\}$, the arrow $d_S$ is equal to
\begin{displaymath}
\sum_{k=1}^{p-1}(-1)^{k+1}(\alpha_{i_k}\mult)_{\hat{i_k}}+(-1)^{p+1}(\alpha_n\mult)_{\hat{n}}.
\end{displaymath}
This can be written as $d_{S'}+(-1)^{p+1}L_{\alpha_n}$, taking into account that in $d_{S'}$ the subindex $\hat{i_k}$ means that the codomain is $A_{(S'\setminus\{i_k\})\cup \{n\}}$ and $L_{\alpha_n}:A_S\rightarrow A_{S'}$. That is, $d|_{K_{p-1}^{n-1}(A)}$ is given by $d+(-1)^{p-1}L_{\alpha_n}$, with 
\begin{displaymath}
d:K_{p-1}^{n-1}(A)\rightarrow K_{p-2}^{n-1}(A)\quad \textrm{and with}\quad L_{\alpha_n}:K_{p-1}^{n-1}(A)\rightarrow K_{p-1}^{n-1}(A)
\end{displaymath}
being the product by $\alpha_n$ in each summand. With this, the  (split) short exact sequence of $A$-modules
\[
\xymatrix{
0\ar[r]&K_p^{n-1}(A)\ar[r]^-{\iota}&K_p^n(A)\ar[r]^-{\tau}&K_{p-1}^{n-1}(A)\ar[r]&0,
}
\]
induces a short exact sequence of complexes 
\[
\xymatrix{
0\ar[r]&K_A(\alpha^{n-1})\ar[r]&K_A(\alpha)\ar[r]&K^{[1]}_A(\alpha^{n-1})\ar[r]&0,
}
\]
where $K^{[1]}_A(\alpha^{n-1})$ is the complex obtained by shifting the degrees of $K_A(\alpha^{n-1})$ up by 1 and the differentials are the same. 
Indeed, for $0\leq p\leq n$, consider
\[
\xymatrix{
0\ar[r]&K_p^{n-1}(A)\ar[d]^-d\ar[r]^-{\iota}&K_p^n(A)\ar[d]^-d\ar[r]^-{\tau}&K_{p-1}^{n-1}(A)\ar[d]^-d\ar[r]&0\\
0\ar[r]&K_{p-1}^{n-1}(A)\ar[r]^-{\iota}&K_{p-1}^n(A)\ar[r]^-{\tau}&K_{p-2}^{n-1}(A)\ar[r]&0.
}
\]
The cases $p=0$ and 1 are straightforward. Suppose $p>1$. By the description of $d|_{K_p^{n-1}(A)}$, above, the square on the left commutes. For the square on the right, since 
$L_{\alpha_n}$ has image in $K_{p-1}^{n-1}(A)$, then clearly $\tau d=d\tau$. 

Since $\mathcal{F}$ is an abelian category, we have the following long exact sequence of homology 
\[\xymatrix{
\ldots\ar[r]&H_{p+1}(K^{[1]}_A(\alpha^{n-1}))\ar[r]^-{\delta}&H_p(K_A(\alpha^{n-1}))\ar[r]& H_p(K_A(\alpha))\ar[r]& 
}
\]
\[
\xymatrix{
&H_p(K^{[1]}_A(\alpha^{n-1}))\ar[r]^-{\delta}&\ldots\quad\quad\quad\quad\quad&&
}
\]
Notice that $H_p(K^{[1]}_A(\alpha^{n-1}))=H_{p-1}(K_A(\alpha^{n-1}))$, so 
\begin{displaymath}
\delta:H_{p-1}(K_A(\alpha^{n-1}))\rightarrow H_{p-1}(K_A(\alpha^{n-1})).
\end{displaymath}
Now, $H_{p-1}(K_A(\alpha^{n-1}))$ is a subquotient of $K_{p-1}^{n-1}(A)$ and (since in $\mathcal{F}$ all limits and colimits are defined pointwise) the connecting morphism $\delta$ is given, in an object $x$, through  the composition $(\iota_x)^{-1}d_x(\tau_x)^{-1}$ (a short for taking inverse image by $\tau_x$, then image by $d_x$, and then taking inverse image by $i_x$). So, by the description of $d|_{K_{p-1}^{n-1}(A)}$, given before, we have $\delta=(-1)^{p-1}L_{\alpha_n}$.

To finish the proof, we observe first that, by Lemma \ref{generado}, Im$L_{\alpha}=\ideal$. Next we proceed, as in the classical case, by induction on $n$.

For $n=1$, the sequence of $A$-modules
\[
\xymatrix{
0\ar[r]&A\ar[r]^-{L_{\alpha_1}}&A\ar[r]&A/I_{\alpha_1}\ar[r]&0
}
\]
is exact because $\alpha_1$ is $A$-regular.

Now suppose the theorem to be true for $n-1$. Then, by induction hypothesis, for $p>1$, in the previous long exact sequence of homology we have
\[\xymatrix{
0=H_p(K_A(\alpha^{n-1}))\ar[r]&H_p(K_A(\alpha))\ar[r]& H_{p-1}(K_A(\alpha^{n-1}))=0. 
}
\]
So $H_p(K_A(\alpha))=0$ for $p>1$. Finally, for $p=1$ we have
\[\xymatrix{
0\ar[r]&H_1(K_A(\alpha))\ar[r]&H_0(K_A(\alpha^{n-1}))\ar[r]^-{\delta}&H_0(K_A(\alpha^{n-1}))\ar[r]&\ldots 
}
\]
But $H_0(K_A(\alpha^{n-1}))=A/A\langle\alpha^{n-1}\rangle$ and $\delta=L_{\alpha_n}$. Since $\alpha_n$ is $(A/A\langle\alpha^{n-1}\rangle)$-regular, we must have $H_1(K_A(\alpha))=0$.
\end{proof}


\section{Functors of polynomials}\label{polynomials}

\subsection*{General properties}

We begin by recalling the following standard definition and notation. 

\begin{defi}
Let  $t_1,\ldots ,t_n$ be variables. Given a monomial $t_1^{i_1}\ldots t_n^{i_n}$, where the exponents $i_{\nu}$ are nonnegative integers, the vector $i=(i_1,\ldots ,i_n)$ of exponents is called a \textit{multi-index} {\em of size $n$}. We write the monomial symbolically as 
\begin{displaymath}
t^i= t_1^{i_1}\ldots t_n^{i_n}.
\end{displaymath}
For the multi-index $(0,\ldots ,0)$, we fix $t^0=t_1^0\ldots t_n^0=1$. 
\end{defi}

Given an $R$-module $U$, we can consider the $R$-module $U[t_1,\ldots ,t_n]$. With the previous definition, a polynomial  $f(t)=f(t_1,\ldots ,t_n)$ in $U[t_1,\ldots ,t_n]$, can be written in exactly one way in the form
\begin{displaymath}
f(t)=\sum_i a_it^i,
\end{displaymath} 
where $i$ runs through all the multi-indices $(i_1,\ldots ,i_n)$, the coefficients $a_i$ are in $U$ and only finitely many of theme are different from zero. Finally, $U[t_1,\ldots ,t_n]$ contains $U$ as the submodule of constant polynomials, that is, polynomials with all coefficients equal to zero except, possibly, the one corresponding to the multi-index $(0, \ldots ,0)$.

We keep the variable $t$ whether we have one or several variables. That is, if we have only one variable, we continue to write $t^i$, instead of $t_1^{i_1}$.

Given an object $F$ in $\mathcal{F}$, we define \textit{the functor of polynomials in $F$}, denoted by $F[t_1,\ldots ,t_n]$, or simply by $F_n$, as follows. In an object $x$ in $\mathcal{X}$ it is defined as the $R$-module $F_n(x):=F(x)[t_1,\ldots ,t_n]$ and, in an arrow $\varphi:x\rightarrow y$ in $\mathcal{X}$, as the  $R$-linear map
\begin{displaymath}
F_n(\varphi):F_n(x)\rightarrow F_n(y),\quad p(t)=\sum_{i}a_it^i\mapsto F(\varphi)(p(t)):=\sum_{i}F(\varphi)(a_i)t^i,
\end{displaymath}
for $p(t)\in F_n(x)$.

\begin{prop}
\label{Atismonoid}
Let  $t_1,\ldots ,t_n$ be variables and $F$ be an object in $\mathcal{F}$. Then $F_n$ is an object in $\mathcal{F}$. Also, if $A$ is a monoid in $\mathcal{F}$, then  $A_n$ inherits a monoid structure from $A$, 
\begin{displaymath}
A_n(x)\times A_n(y)\rightarrow A_n(x\diamond y),
\end{displaymath}
by sending $p(t)=\sum_{i}a_it^i\in A_n(x)$ and $q(t)=\sum_jb_jt^j\in A_n(y)$ to 
\begin{displaymath}
(p\times q)(t):=\sum_{i,\, j}(a_i\times b_j)t^{i+j}\in A_n(x\diamond y).
\end{displaymath}
The identity element is given by the identity element of $A$, $\varepsilon(t):=\varepsilon$,  seen as a constant polynomial in $A_n(\uno)$. 
\end{prop}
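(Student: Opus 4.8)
The plan is to verify directly that $A_n$, with the proposed operations, satisfies all the axioms of a monoid in $\mathcal{F}$, reducing everything to the corresponding axioms for $A$ by comparing coefficients of each monomial $t^k$ separately. The key observation throughout is that the formulas in the statement are ``$R$-bilinear extensions by monomials'': the product of $\sum_i a_i t^i$ and $\sum_j b_j t^j$ picks out, in degree $k$, the finite sum $\sum_{i+j=k} a_i\times b_j$, so every identity to be checked is a finite $R$-linear combination of instances of the analogous identity in $A$.

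First I would check that $F_n$ is a well-defined object of $\mathcal{F}$: the assignment $\varphi\mapsto F_n(\varphi)$ is $R$-linear in each coefficient (hence $R$-linear on $F_n(x)$), it sends identities to identities and respects composition because $F$ does so coefficientwise, and it is additive in $\varphi$ since $F$ is; thus $F_n$ is an $R$-linear functor $\mathcal{X}\to R\text{-}\mathrm{Mod}$. Next, assuming $A$ is a monoid, I would assemble the proposed product into a natural transformation $\mu_{A_n}\colon A_n\otimes A_n\to A_n\circ(\_\diamond\_)$: one checks $R$-bilinearity of $(p,q)\mapsto (p\times q)(t)$ in each pair of coefficients (so that it factors through the tensor product defining $\otimes$ in $\mathcal{F}$), and naturality in the pair of objects $(x,y)$, which follows from the naturality of $\mu_A$ applied to each coefficient pair $(a_i,b_j)$ together with the fact that $A(\varphi\diamond\psi)$ acts coefficientwise. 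Then I would verify associativity of $\mu_{A_n}$ and compatibility with the associativity constraint $\alpha^{\mathcal{F}}$: in degree $k$ both sides reduce to $\sum_{i+j+l=k}(a_i\times b_j)\times c_l$ versus $\sum_{i+j+l=k}a_i\times(b_j\times c_l)$, which agree by associativity of $\mu_A$ and the coherence of $\alpha$ in $\mathcal{X}$, $\alpha^{\mathcal{F}}$ in $\mathcal{F}$. Finally, for the unit $\varepsilon(t)=\varepsilon$ (the constant polynomial given by $e_A$), the left and right unit axioms reduce to $\sum_{i+0=k}\varepsilon\times a_i=\lambda$-image of $a_k$ and similarly on the right, which are exactly the unit axioms for $A$ applied in each degree $k$ (all but finitely many degrees being trivially $0=0$).

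I would organize the write-up so that each monoid axiom for $A_n$ is displayed as an equality of two polynomials, then reduced to ``equality in each $t^k$-component'', and finally to the named axiom for $A$; the symmetry/coherence constraints $\lambda^{\mathcal{F}},\rho^{\mathcal{F}},S$ intervene only through how they act on $F_n$, which again is coefficientwise, so no new phenomena arise.

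The part requiring the most care — though it is bookkeeping rather than a genuine difficulty — is keeping the coherence isomorphisms $\alpha$, $\lambda$, $\rho$ of $\mathcal{X}$ (and their counterparts in $\mathcal{F}$) straight when unwinding the definition of $\mu_{A_n}$ and comparing with the monoid axioms for $A$, since the product $A_n(x)\times A_n(y)\to A_n(x\diamond y)$ hides an application of $A$ to these constraint morphisms exactly as $\mu_A$ does. Once one fixes the convention that ``$t^{i+j}$'' means the monomial obtained after transporting along the relevant associativity/unit isomorphism in the index monoid $\mathbb{N}^n$, the checks are mechanical; there is no obstacle of a conceptual nature, and in particular the tensor product of complexes plays no role here — this is purely a statement about monoids, not about resolutions.
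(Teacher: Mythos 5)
Your proposal is correct and follows essentially the same route as the paper: the paper's proof simply observes that $F_n(\varphi)$ and the product on $A_n$ extend those of $F$ and $A$ coefficientwise, recording the convolution formula $(p\times q)(t)=\sum_k p_k t^k$ with $p_k=\sum_{i+j=k}a_i\times b_j$, which is exactly the degree-$k$ reduction you carry out in detail. Your write-up merely makes explicit the coefficientwise verifications that the paper declares ``clear.''
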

\begin{proof}
The first assertion is clear, since $F_n(\varphi)$ extends $F(\varphi)$ and $F$ is a functor. The rest of the proposition also follows easily, since the product defined in $A_n$ extends the one in $A$. We just notice, as usual, that $(p\times q)(t)=\sum_kp_kt^k$, where
\begin{displaymath}
p_k=\sum_{i+j=k}a_i\times b_j
\end{displaymath}
and $k$ is a multi-index.
\end{proof}


\begin{rem}
\label{propiedades}
Let  $t_1,\ldots ,t_n$ be variables, $F$ an object in $\mathcal{F}$ and $A$ a monoid in $\mathcal{F}$. From the previous proposition, we have the following observations:
\begin{enumerate}
\item Sending $F$ to $F_n$ defines an endofunctor of the category $\mathcal{F}$. 
\item The functor $F_n$ is isomorphic to a coproduct, running over all the multi-indices, of copies of $F$. In the case of $A$, this is an isomorphism of $(A,\,A)$-bimodules.
\item Let $u$ and $v$ be variables. Then $F$ is a subfunctor of $F[u]$. More generally,  $F[u]$ is a subfunctor of $F[u,v]$ and, in fact, $F[u,v]=(F[u])[v]$.  On the other hand, $A$ is a submonoid of $A[u]$ and  $A[u]$ is a submonoid of $A[u,v]$. In particular, any $A[u]$-module is an $A$-module. This observation extends in an obvious way to $n$ variables.
\item If $A$ is a commutative monoid, then $A_n$ is commutative too. 
\item Each variable $t_i$ can be found in $A_n(\uno)$ as $\varepsilon t_i$. Moreover $\lae t_i\in CA_n(\uno)$.
\end{enumerate}
\end{rem}

\begin{lema} \label{C[t] tensor D}Let $C$ and $D$ be monoids in $\mathcal{F}$.
\begin{enumerate}
\item Let $t$ be a variable. Then there are isomorphisms of monoids 
$$C[t]\otimes D\cong (C\otimes D)[t]\cong C\otimes D[t].$$
\item Let $n,m\in\N$, and $u_1,\ldots,u_n,v_1,\ldots v_m$ be variables. Then there is an isomorphism of monoids
$$C[u_1,\ldots,u_n]\otimes D[v_1,\ldots,v_m]\cong (C\otimes D)[u_1,\ldots,u_n,v_1,\ldots v_m].$$
\end{enumerate}
\end{lema}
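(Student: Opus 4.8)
The plan is to prove part (1) directly by writing down an explicit isomorphism, and then deduce part (2) from part (1) by induction on $n+m$. For part (1), the key observation is that the underlying functor of $C[t]\otimes D$ should be "polynomials in $t$ with coefficients in $C\otimes D$": concretely, since $C[t]$ is a coproduct $\bigoplus_{i\geq 0} C$ of copies of $C$ indexed by the exponent $i$ (by Remark~\ref{propiedades}(2)), and $\otimes$ commutes with coproducts (it is a closed monoidal category, so $\_\otimes D$ is a left adjoint), we get
$$C[t]\otimes D\cong\Bigl(\bigoplus_{i\geq 0}C\Bigr)\otimes D\cong\bigoplus_{i\geq 0}(C\otimes D)\cong (C\otimes D)[t],$$
and symmetrically for $C\otimes D[t]$. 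So the first step is to set up this chain of functor isomorphisms carefully, being explicit about how the grading by $i$ is carried along.

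The substantive part is to check that these isomorphisms are isomorphisms of \emph{monoids}, not merely of functors. For this I would describe the monoid structure on $C[t]\otimes D$ explicitly. Recall that for monoids $C,D$ in a symmetric monoidal category, $C\otimes D$ is a monoid with multiplication $(\mu_C\otimes\mu_D)\circ(\mathrm{id}\otimes S\otimes\mathrm{id})$ (inserting the symmetry $S$ of $\mathcal{F}$ to swap the middle factors) and unit $e_C\otimes e_D$. Applying this with $C$ replaced by $C[t]$ and tracking the exponent grading, a pure tensor $(c\,t^i)\otimes d$ times $(c'\,t^{i'})\otimes d'$ becomes $(c\times c')\,t^{i+i'}\otimes(d\times d')$ — using that the product on $C[t]$ is $(c\,t^i)\times(c'\,t^{i'})=(c\times c')t^{i+i'}$ from Proposition~\ref{Atismonoid}. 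On the other side, in $(C\otimes D)[t]$ the product sends $(c\otimes d)\,t^i$ and $(c'\otimes d')\,t^{i'}$ to $\bigl((c\otimes d)\times(c'\otimes d')\bigr)t^{i+i'}=\bigl((c\times c')\otimes(d\times d')\bigr)t^{i+i'}$, again by Proposition~\ref{Atismonoid} applied to the monoid $C\otimes D$. These match under the stated identification, and the units correspond since $\varepsilon_{C[t]}\otimes\varepsilon_D=(\varepsilon_C t^0)\otimes\varepsilon_D\leftrightarrow(\varepsilon_C\otimes\varepsilon_D)t^0$. The identification $C\otimes D[t]\cong(C\otimes D)[t]$ is handled the same way, with the symmetry of $\mathcal{F}$ intervening only to move $t^i$ past a factor of $C$, which is harmless since the grading is what it tracks.

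For part (2), I would induct on $n+m$. The base case $n=m=0$ is trivial, and the case $n+m=1$ is part (1). For the inductive step, write $C[u_1,\ldots,u_n]=C[u_1,\ldots,u_{n-1}][u_n]$ (when $n\geq 1$), which is legitimate by Remark~\ref{propiedades}(3), apply part (1) with the variable $u_n$ to peel it off, then invoke the induction hypothesis on the remaining $n-1+m$ variables, and finally re-adjoin $u_n$ using Remark~\ref{propiedades}(3) again; the case $n=0$, $m\geq 1$ is symmetric. Each step is a monoid isomorphism, so the composite is as well.

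The main obstacle I anticipate is bookkeeping rather than conceptual: one must be careful that the functor isomorphism "$\otimes$ commutes with coproducts" is compatible with the full monoidal structure including the associativity and symmetry constraints $\alpha^{\mathcal{F}}, S$ of $\mathcal{F}$, so that the diagram defining the monoid multiplication really does transport correctly. In practice, because everything in $\mathcal{F}$ is computed pointwise (as noted in the proof of Theorem~\ref{esreso}), it suffices to verify these identities on elements $c\,t^i\otimes d$ at each object $x$, where the computation reduces to the elementary formulas recalled above; so I would phrase the argument pointwise to keep it short.
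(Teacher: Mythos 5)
Your proposal is correct and follows essentially the same route as the paper: decompose $C[t]=\bigoplus_i Ct^i$, use that $\otimes$ commutes with coproducts to get the functor isomorphism $\Phi$ graded by the exponent, check that $\Phi$ respects units and products degreewise (the paper does this with a commutative diagram on the summands $Ct^n\otimes D$, you do it pointwise on pure tensors, which amounts to the same verification), and obtain part (2) by iterating part (1) one variable at a time. No gaps.
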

\begin{proof} 1. Since the tensor product of monoids is commutative up to isomorphism, it suffices to prove the first isomorphism $C[t]\otimes D\cong (C\otimes D)[t]$. Now $C[t]=\mathop{\bigoplus}_{n\in \N}\limits Ct^n$, and $Ct^n\cong C$ in $\CF$. It follows that we have isomorphisms in $\CF$
$$C[t]\otimes D=\big(\bigoplus_{n\in\N}Ct^n\big)\otimes D=\bigoplus_{n\in\N}(Ct^n\otimes D)\cong \bigoplus_{n\in\N}(C\otimes D)\cong (C\otimes D)[t],$$
where the composed isomorphism $\Phi:C[t]\otimes D\to (C\otimes D)[t]$ is given by the identification $\Phi_n$ of $Ct^n\otimes D$ with $(C\otimes D)t^n$, for all $n\in \N$. In particular, the restriction $\Phi_0$ of $\Phi$ to $C\otimes D=Ct^0\otimes D$ is the identity morphism $C\otimes D=Ct^0\otimes D\to (C\otimes D)t^0=C\otimes D$, so $\Phi$ maps the identity element of $C[t]\otimes D$ to the identity element of $(C\otimes D)[t]$.\par
Hence all we have to check is that $\Phi$ is compatible with the product of monoids, which follows from the following commutative diagram, for all $n,m\in\N$,
\[\xymatrix@C=1ex{
C[t]\otimes D&\otimes& C[t]\otimes D\ar[rrr]^-{\Phi\otimes\Phi}&&&(C\otimes D)[t]&\otimes& (C\otimes D)[t]\\
Ct^n\otimes D\ar[drr]\ar@{^{(}->}[u]&\otimes& Ct^m\otimes D\ar[dll]\ar@{^{(}->}[u]\ar[rrr]^-{\Phi_n\otimes\Phi_m}&&&(C\otimes D)t^n\ar@{^{(}->}[u]&\otimes\ar@{=}[d]& (C\otimes D)t^m\ar@{^{(}->}[u]\\
Ct^n\otimes Ct^m\ar[d]_{\mu_{C[t]}}&\otimes& D\otimes D\ar[d]_{\mu_{D}}&&&(C\otimes D)t^n&\otimes\ar[d]_{\mu_{(C\otimes D)[t]}}
& (C\otimes D)t^m\\
Ct^{n+m}&\otimes& D\ar[rrr]^-{\Phi_{n+m}}&&&&\makebox[4ex]{$(C\otimes D)t^{n+m}.$}&\\
}
\]
2. This follows from Assertion 1, applying $n$ times the left hand side isomorphism and $m$ times the right hand side isomorphism.
\end{proof}

The following notion already appears (without a name) in Proposition~5.8 of~\cite{primero}:

\begin{defi} \label{tensor idempotent}Let $A$ be a monoid in $\CF$. We say that $A$ is {\em tensor idempotent} if $\mu_A:A\otimes A\to A$ is an isomorphism in $\CF$, or equivalently, an isomorphism of $(A,A)$-bimodules.
\end{defi}

\begin{ejem}
Clearly, the identity functor $I$ is tensor idempotent and, by Proposition 5.8 in \cite{primero}, any quotient of it in $\mathcal{F}$ is also tensor idempotent.  So, if $\mathcal{F}$ is the category of \textit{biset functors} (see Example \ref{ejemplos}), then the Burnside functor $RB$  is tensor idempotent. Moreover, since any biset subfunctor of $RB$ is an ideal, any quotient biset functor of $RB$ is a tensor idempotent monoid. For example, the simple biset functor $S_{1,k}$, where $k$ is any field, and the functor of rational representations $kR_{\rac}$, where $k$ is a field of characteristic~0 (see \cite{biset}), are tensor idempotent.
\end{ejem}


\begin{prop} \label{A tensor A iso A}
Suppose that $A$ is a  tensor idempotent commutative monoid in $\mathcal{F}$. Then:
\begin{enumerate}
\item $A_n\otimes A_n\cong A_{2n}$ as monoids in $\mathcal{F}$.  
\item Also, via this isomorphism, if we let $A_{2n}=A[u_1,\ldots ,u_n,v_1,\ldots ,v_n]$, then the kernel of $\mu_{A_n}:A_n\otimes A_n\rightarrow A_n$
is equal to the ideal $A_{2n}\langle u_1-v_1,\ldots ,u_n-v_n\rangle$.
\end{enumerate}
\end{prop}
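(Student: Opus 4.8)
The plan is to derive Assertion~1 directly from Lemma~\ref{C[t] tensor D} and the hypothesis, and then to prove Assertion~2 by transporting $\mu_{A_n}$ along that isomorphism to the polynomial monoid $A_{2n}$. For Assertion~1: since $A$ is commutative, $A\otimes A$ carries a monoid structure and $\mu_A\colon A\otimes A\to A$ is a morphism of monoids, which by tensor idempotence is an isomorphism of monoids. By Remark~\ref{propiedades}(1), $F\mapsto F_{2n}$ is an endofunctor of $\mathcal{F}$, and the coefficientwise description of the product in Proposition~\ref{Atismonoid} makes it clear that this endofunctor sends a morphism of monoids to a morphism of monoids, hence an isomorphism to an isomorphism. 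Applying it to $\mu_A$ gives an isomorphism of monoids $(A\otimes A)_{2n}\cong A_{2n}$, and composing with the isomorphism $A_n\otimes A_n\cong(A\otimes A)_{2n}$ of Lemma~\ref{C[t] tensor D}(2) (with $C=D=A$, $m=n$, and the two families of variables $u_1,\dots,u_n$ and $v_1,\dots,v_n$) produces the isomorphism of monoids $\Theta\colon A_n\otimes A_n\xrightarrow{\ \sim\ }A_{2n}=A[u_1,\dots,u_n,v_1,\dots,v_n]$ required for Assertion~1.

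For Assertion~2, set $\bar\mu:=\mu_{A_n}\circ\Theta^{-1}\colon A_{2n}\to A_n$, a morphism of monoids; since $\Theta$ is an isomorphism, $\ker\mu_{A_n}=\Theta^{-1}(\ker\bar\mu)$, so it is enough to show $\ker\bar\mu=A_{2n}\langle u_1-v_1,\dots,u_n-v_n\rangle$. (Here $u_\nu-v_\nu$ stands for $\varepsilon u_\nu-\varepsilon v_\nu\in A_{2n}(\uno)$; by Remark~\ref{propiedades}(5) it lies in $CA_{2n}(\uno)$, so the ideal notation of Lemma~\ref{generado} applies.) The first step is to identify $\bar\mu$. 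Tracing $\Theta$ on the coproduct decomposition of Remark~\ref{propiedades}(2): the summand $Au^{i'}v^{i''}$ of $A_{2n}$ is carried by $\Theta^{-1}$ onto the $(i',i'')$-summand of $A_n\otimes A_n$ via $\mu_A^{-1}$, and then by $\mu_{A_n}$ onto the summand $At^{i'+i''}$ of $A_n$ via $\mu_A$; since $\mu_A\circ\mu_A^{-1}=\mathrm{id}_A$, we conclude that $\bar\mu$ is the substitution morphism $u_\nu\mapsto t_\nu$, $v_\nu\mapsto t_\nu$ acting as the identity on coefficients. In particular $\bar\mu(\varepsilon u_\nu-\varepsilon v_\nu)=0$, and since $\ker\bar\mu$ is an ideal of $A_{2n}$ (being the kernel of a morphism of monoids), this gives the inclusion $A_{2n}\langle u_1-v_1,\dots,u_n-v_n\rangle\subseteq\ker\bar\mu$.

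For the reverse inclusion I would argue objectwise. For an object $x$ of $\mathcal{X}$ the sections $A_{2n}(x)=A(x)[u,v]$ and $A_n(x)=A(x)[t]$ are ordinary polynomial modules over the $R$-module $A(x)$, and $\bar\mu_x$ is the $R$-linear substitution $u_\nu,v_\nu\mapsto t_\nu$. Making the invertible change of variables $w_\nu:=u_\nu-v_\nu$, any $f\in\ker\bar\mu_x$ becomes a polynomial in $w_1,\dots,w_n,v_1,\dots,v_n$; as $\bar\mu_x$ kills each $w_\nu$ and sends $v_\nu$ to $t_\nu$, the condition $\bar\mu_x(f)=0$ forces the $w$-degree-zero part of this polynomial to vanish, so that $f\in(u_1-v_1,\dots,u_n-v_n)\,A(x)[u,v]$ by the classical computation. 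On the other side, Lemma~\ref{generado} together with the unit axiom for $A$ (which yields $A_{2n}(\rho_x)\big(c\times(\varepsilon u_\nu-\varepsilon v_\nu)\big)=c\cdot(u_\nu-v_\nu)$ for $c\in A_{2n}(x)$, the dot being honest multiplication by the polynomial $u_\nu-v_\nu$) identifies $\big(A_{2n}\langle u_1-v_1,\dots,u_n-v_n\rangle\big)(x)$ with precisely $(u_1-v_1,\dots,u_n-v_n)\,A(x)[u,v]$. Comparing, $\ker\bar\mu\subseteq A_{2n}\langle u_1-v_1,\dots,u_n-v_n\rangle$, and together with the previous inclusion this completes the proof.

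The routine ingredients are the functoriality check in Assertion~1 and the classical polynomial manipulations in Assertion~2. The main obstacle is the middle part: identifying $\bar\mu$ as the substitution morphism through the chain of isomorphisms, and, above all, matching the monoid-theoretic ideal $A_{2n}\langle u_1-v_1,\dots,u_n-v_n\rangle$ with the ordinary polynomial ideal $(u_1-v_1,\dots,u_n-v_n)A(x)[u,v]$ on sections via Lemma~\ref{generado} and the unit axioms, which is what makes the familiar change-of-variables argument legitimate in this setting.
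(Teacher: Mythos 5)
Your proof is correct and follows essentially the same route as the paper: Assertion 1 via Lemma~\ref{C[t] tensor D} and the functoriality of $F\mapsto F_{2n}$ applied to $\mu_A$, and Assertion 2 by identifying the transported multiplication as the substitution $u_\nu,v_\nu\mapsto t_\nu$ acting as the identity on coefficients and then verifying the two inclusions. The only cosmetic difference is in the reverse inclusion, where you carry out the change of variables $w_\nu=u_\nu-v_\nu$ objectwise, whereas the paper packages the same idea as the decomposition $A[u,v]=A[u]+J$ together with the injectivity of the substitution morphism on $A[u]$.
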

\begin{proof}1. Taking $C=D=A$ and $m=n$ in Assertion 2 of Lemma~\ref{C[t] tensor D}, we get that $A_n\otimes A_n\cong (A\otimes A)_{2n}$ as monoids in $\CF$. Now since $A$ is commutative, the morphism $\mu_n:A_n\otimes A_n\to A_n$ is a morphism of monoids. Then the assumption implies that $A\otimes A\cong A$ as monoids, so $(A\otimes A)_{2n}\cong A_{2n}$ as monoids, and $A_n\otimes A_n\cong A_{2n}$ as claimed.\mpn
2. Let $u=(u_1,\ldots,u_n)$, $v=(v_1,\ldots,v_n)$ and $t=(t_1,\ldots ,t_n)$. Then
$$A[u]=\bigoplus_{i}Au^i,\;A[v]=\bigoplus_jAv^j,\;\hbox{and}\;A[t]=\bigoplus_kAt^k,$$
where $i,j,k$ run through multi-indices of size $n$. The morphism
$$\mu_{A_n}:A[u]\otimes A[v]\to A[t]$$
is induced by the morphisms $\mu_{i,j}:Au^i\otimes Av^j\cong A\otimes A\stackrel{\mu_A}{\to}A\cong At^{i+j}$. We have a commutative diagram
\[\xymatrix@R=2ex@C=2ex{
A_n\ar@{=}[d]& &A_n\ar@{=}[d]&&A_n\otimes A_n\ar@{=}[d]\\
A[u]\ar@{=}[dd]&\otimes& A[v]\ar@{=}[dd]\ar[rr]^-\cong&&(A\otimes A)[u,v]\ar@{=}[dd]\\
&&&\\
\bigoplus_{i}\limits Au^i&\otimes\ar[dd]_-{\bigoplus_{i,j}\limits \mu_{i,j}} &\bigoplus_j\limits Av^j\ar[rr]^-\cong&&\bigoplus_{i,j}\limits (A\otimes A)u^iv^j\ar[dd]_-{\bigoplus_{i,j}\limits\mu_A}^-{\cong}\\
&&&\\
&\bigoplus_{k}\limits At^{k}\ar@{=}[d]&&&\bigoplus_{i,j}\limits Au^iv^j\ar[lll]_-{\pi}\ar@{=}[d]\\
&A_n&&&A_{2n}
}
\]
where the bottom right vertical isomorphism is induced by $\mu_A:A\otimes A\to A$, and the bottom horizontal morphism $\pi:A_{2n}\to A_n$ is induced by the morphisms $Au^iv^j\to At^{i+j}$ which are the identity on $A$, and map $u^iv^j$ to $t^{i+j}$. It follows that for all $l\in\{1,\ldots,n\}$, the elements $u_l$ and $v_l$ of $A_{2n}(\uno)$ are both mapped to $t$ by $\pi$. Hence $\Ker\,\pi$ contains the ideal $J=A_{2n}\langle u_1-v_1,\ldots,u_n-v_n\rangle$ of $A_{2n}$ generated by $u_1-v_1,\ldots,u_n-v_n$.\par
Now $A[u]$ is a submonoid of $A[u,v]$, and we clearly have $A[u,v]=A[u]+J$. Thus
$J\leq \Ker\,\pi\leq A[u]+J$, so $\Ker\,\pi=J+\big(\Ker\,\pi\cap A[u]\big)$. But the restriction of $\pi$ to $A[u]$ is an isomorphism, as it is induced by the morphisms $Au^i\to At^i$ which are the identity on $A$ and map $u^i$ to $t^i$, for all $i\in\N$. Then $\Ker\,\pi\cap A[u]=0$, and $\Ker\,\pi=J$, as was to be shown.
\end{proof}

\subsection*{Hochschild cohomology and Hilbert syzygy theorem}

In what follows, we refer to Section 4 of \cite{resmac} for the notion of projectivity {\em relative to a functor $\CR:\CC\to \CD$ between categories  $\CC$ and $\CD$}. Suppose that $\CC$ and $\CD$ are abelian categories and that $\CR$ is a faithful exact additive functor admitting a left adjoint. We will say that an object $M$ of $\CC$ is {\em $\CD$-projective} if it is projective relative to $\CR$ in the sense of~\cite{resmac}. Similarly, we will say that a complex $\CL$ in $\CC$ is {\rm $\CD$-split} if $\CR(\CL)$ is a split complex in $\CD$. \par
Now for an object $M$ of $\CC$, we say that a complex 
\begin{equation}
\ldots \to L_i\to L_{i-1}\to \ldots L_0\to M\to 0\tag{$\CL$}
\end{equation}
in $\CC$ is a {\em $\CD$-split resolution of $M$} if it is exact in $\CC$ and if $\CR(\CL)$ is a split complex in~$\CD$. Since $\CR$ is exact, this is equivalent to saying that $\CL$ is exact and $\CR(\CL)$ is {\em split exact} in~$\CD$. With this terminology, Lemma~4.6 of~\cite{resmac} says that every object $M$ of $\CC$ admits a $\CD$-split resolution $\CL$, where the objects $L_i$, for $i\geq 0$, are $\CD$-projective, and that such a resolution is unique up to homotopy.\medskip\par
In our context, we consider a monoid $A$ in $\CF$, and the functor $\CR$ will be the restriction functor $\CR_A:A\hbox{-Mod}\to\CF$. Then, every $A$-module $M$ admits an $\CF$-split resolution by $\CF$-projective objects, and such a resolution is unique up to homotopy. 
\medskip\par

From now on, we suppose that $A$ is a tensor idempotent commutative monoid in~$\CF$, as in Proposition~\ref{A tensor A iso A}. In particular, $A_n$-bimodules coincide with $A_{2n}$-modules.

\begin{teo}\label{koszul An}
Let $A$ be a tensor idempotent commutative monoid in $\mathcal{F}$. Let $C=A_{2n}=A[u_1,\ldots ,u_n,v_1,\ldots ,v_n]$ and $\alpha_i=u_i-v_i$ for $i=1,\ldots ,n$. Also, let $M$ be an $A_n$-bimodule and, for a positive integer $p$, consider $\mathcal{H}H^p(A_n,\, M)$, the Hochschild cohomology of $A_n$ with coefficients in $M$ (see \cite{primero}).  Then:
\begin{enumerate}
\item The sequence $\alpha=(\alpha_1,\ldots ,\alpha_n)$ is a regular sequence for $C$ and the Koszul resolution of $C/(C\langle \alpha\rangle)\cong A_n$,
\[
\xymatrix@C=4ex{
0\ar[r] &K_n^n(C)\ar[r]^-d&\ldots\,\ar[r]^-d&K_{2}^n(C)\ar[r]^-d& K_1^n(C)\ar[r]^-{L_{\alpha}}&C\ar[r]^-{\mu_{A_n}}&A_n\ar[r]&0
}\tag{$K_C(\alpha)_{\mu}$}
\]
is an $\CF$-split resolution of $A_n$ by $\CF$-projective $A_{2n}$-modules. 
\item  $\mathcal{H}H^p(A_n,\, M)=0$ for $p>n$ and $\mathcal{H}H^p(A_n,\, A_n)\, \cong\, K_p^n(A_n)$ for $p=0,\ldots ,n$.
\end{enumerate}
\end{teo}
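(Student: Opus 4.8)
The plan is to deduce assertion~(1) from Theorem~\ref{esreso} applied to $C=A_{2n}$ and the sequence $\alpha=(u_1-v_1,\dots,u_n-v_n)$, and then assertion~(2) by plugging the resulting resolution into the definition of Hochschild cohomology; the real content is the verification of the hypotheses of Theorem~\ref{esreso} together with the $\CF$-projectivity and $\CF$-splitness of the resolution.

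\emph{The regular sequence.} First I would record the ``one pair'' facts: for any monoid $B$ in $\CF$ and variables $u,v$, the operator $L_{u-v}$ is injective on $B[u,v]$ --- indeed $(u-v)f=uf-vf$ has $u$-degree exactly one more than that of $f$, hence is nonzero for $f\neq 0$ --- so $u-v$ is $B[u,v]$-regular, and moreover $B[u,v]/B[u,v]\langle u-v\rangle\cong B[w]$ by the very computation carried out in the proof of Proposition~\ref{A tensor A iso A}, part~2. Writing $\alpha^j=(\alpha_1,\dots,\alpha_j)$, an induction on $j$ using these facts (with $B$ the polynomial monoid on all variables other than $u_{j+1},v_{j+1}$) gives $C/C\langle\alpha^j\rangle\cong A[w_1,\dots,w_j,u_{j+1},\dots,u_n,v_{j+1},\dots,v_n]$ and shows $\alpha_{j+1}=u_{j+1}-v_{j+1}$ is regular for this quotient; for $j=n$ one gets $C/C\langle\alpha\rangle\cong A_n\neq 0$. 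Thus $\alpha$ is a regular sequence for $C$ (Definition~\ref{regular sequence}), so $K_C(\alpha)$ is a resolution of $C/C\langle\alpha\rangle$ by Theorem~\ref{esreso}; and by the commutative diagram in the proof of Proposition~\ref{A tensor A iso A}, part~2, the isomorphism $C\cong A_n\otimes A_n$ identifies $C/C\langle\alpha\rangle$ with $A_n$ and the augmentation of $K_C(\alpha)$ with $\mu_{A_n}$, giving the complex $(K_C(\alpha)_\mu)$.

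\emph{Relative projectivity and splitness.} Each $K_p^n(C)=C^{\binom{n}{p}}\cong C\otimes I^{\binom{n}{p}}$ is in the image of the free-module functor $M\mapsto C\otimes M$, the left adjoint of $\CR_{A_{2n}}$, hence is $\CR_{A_{2n}}$-projective, i.e. $\CF$-projective. For $\CF$-splitness --- the crux of the argument --- I would restrict scalars along the submonoid inclusion $A[v_1,\dots,v_n]\hookrightarrow C$, observing that $\CR_{A_{2n}}$ equals $\CR_{A[v_1,\dots,v_n]}$ composed with this restriction. Over $A[v_1,\dots,v_n]$ every $K_p^n(C)$ is free, and the augmentation object $A_n$ is free of rank one, since the composite $A[v_1,\dots,v_n]\hookrightarrow C\to A_n$, $v_i\mapsto t_i$, is an isomorphism of monoids. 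An exact complex of projective $A[v_1,\dots,v_n]$-modules resolving a projective one splits over $A[v_1,\dots,v_n]$ (peel off split short exact sequences starting from the augmented end), and a split complex stays split under any additive functor; applying $\CR_{A[v_1,\dots,v_n]}$ we conclude $\CR_{A_{2n}}(K_C(\alpha)_\mu)$ is split in $\CF$, which proves~(1).

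\emph{The cohomology.} Because $A$ is commutative and tensor idempotent, $A_{2n}\cong A_n\otimes A_n=A_n^e$ (Proposition~\ref{A tensor A iso A}, part~1), so $A_n$-bimodules are $A_{2n}$-modules and $\mathcal{H}H^p(A_n,M)$ is the $p$-th cohomology of $\Hom_{A_{2n}}(\mathbf K,M)$ for any $\CF$-split resolution $\mathbf K$ of $A_n$ by $\CF$-projective $A_{2n}$-modules; these being unique up to homotopy, we may use the deleted complex from~(1). Since $K_p^n(C)=0$ for $p>n$, immediately $\mathcal{H}H^p(A_n,M)=0$ for $p>n$. For $M=A_n$, the degree-$p$ term of $\Hom_{A_{2n}}(\mathbf K,A_n)$ is $\Hom_{A_{2n}}(C^{\binom{n}{p}},A_n)\cong A_n^{\binom{n}{p}}=K_p^n(A_n)$, while the differential is the transpose of the Koszul differential, whose entries are (up to sign) the operators $L_{u_i-v_i}$; but $u_i-v_i$ acts on $A_n$ through $\mu_{A_n}$ as $t_i-t_i=0$, so all differentials vanish and $\mathcal{H}H^p(A_n,A_n)\cong K_p^n(A_n)$ for $0\le p\le n$. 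I expect the $\CF$-splitness step to be the main obstacle --- spotting the intermediate category ($A[v_1,\dots,v_n]$-modules) over which the Koszul complex is manifestly a split resolution --- while the rest is an assembly of Theorem~\ref{esreso}, Proposition~\ref{A tensor A iso A}, and standard relative homological algebra.
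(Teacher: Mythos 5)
Your proposal is correct and follows essentially the same route as the paper: regularity of $\alpha$ via the change of variables $C=A[u][\alpha_1,\ldots,\alpha_n]$ (which your degree/induction argument makes explicit), $\CF$-projectivity because each term is free over $C$, and $\CF$-splitness by restricting to an intermediate polynomial submonoid over which all terms including $A_n$ are free — the paper uses $A[u_1,\ldots,u_n]$ where you use $A[v_1,\ldots,v_n]$, which is the same argument by symmetry. Part 2 is also identical in substance (vanishing of the dual differentials because each $\alpha_i$ acts as zero on $A_n$), the only cosmetic difference being that the paper phrases it with the internal hom $\mathcal{H}_C(-,A_n)$ rather than $\Hom_{A_{2n}}$, since $\mathcal{H}H^p(A_n,M)$ is an object of $\CF$.
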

\begin{proof}
1. Let $C'=A[u_1,\ldots , u_n]$. We notice that, by Remark \ref{propiedades}, $C=C'[v_1,\ldots ,v_n]$ and also $C=C'[\alpha_1,\ldots , \alpha_n]$. The last equality implies that $\alpha$ is a regular sequence for $C$ and hence, by Theorem \ref{esreso}, we know that $K_C(\alpha)_\mu$ is an exact sequence of $C$-modules. We must show now that $C$ and the $K_i^n(C)$, with $1\leq i\leq n$ are $\CF$-projective and that $\mathcal{R}_C\big(K_C(\alpha)_\mu\big)$ is  a split complex. Since $C$ and the $K_i^n(C)$, with $1\leq i\leq n$, are projective $C$-modules (each is a coproduct of copies of $C$), then, by Remark 4.5 in \cite{resmac}, they are projective with respect to $\mathcal{R}_C$, i.e. $\CF$-projective. 

To prove that  
\[
\xymatrix{
0\ar[r] &\mathcal{R}_C\big(K_n^n(C)\big)\ar[r]^-d&\ldots\,\ar[r]^-d& \mathcal{R}_C\big(K_1^n(C)\big)\ar[r]^-{L_{\alpha}}&\mathcal{R}_C(C)\ar[r]^-{\mu_{A_n}}&\mathcal{R}_C(A_n)\ar[r]&0
}
\]
is  split, we consider first the restriction to $C'$. That is, each of the objects  $\CR_{C'}^C(A_{n})$, $\CR_{C'}^C(C)$ and $\CR_{C'}^C\big(K_i^n(C)\big)$ with $1\leq i\leq n$ is isomorphic to a coproduct of copies of $C'$, by Remark \ref{propiedades}. Hence, they are all projective $C'$-modules and   $\CR_{C'}^C\big(K_C(\alpha)_\mu\big)$ is  a split complex of $C'$-modules.  Hence, $\mathcal{R}_C(K_C(\alpha)_\mu)=\mathcal{R}_{C'}(\mathcal{R}^{C}_{C'}(K_C(\alpha)_\mu))$ is  a split complex.\medskip\par

2. By  Lemma 5.1 and Remark 5.2  in \cite{primero}, the bar resolution and the Koszul resolution of $A_n$ are homotopy equivalent. Hence, we can calculate the Hochschild cohomology of $A_n$ using $K_C(\alpha)_{\mu}$. Then, clearly $\mathcal{H}H^p(A_n,\, M)=0$.

Again using the Koszul resolution, in the Hochschild cochain complex of $A_n$, for $1\leq p\leq n$, we have the morphism
\begin{displaymath}
\Phi :\mathcal{H}_{C}(K_p^n(C),\, A_n)\longrightarrow \mathcal{H}_C(K_{p+1}^n(C),\, A_n),
\end{displaymath}
which, in an object $x$ of $\mathcal{X}$, sends an arrow $t\in\mathrm{Hom}_{C\hbox{-}\mathrm{Mod}}(K_p^n(C),\, (A_n)_x)$ to $t\circ d\in \mathrm{Hom}_{C\hbox{-}\mathrm{Mod}}(K_{p+1}^n(C),\, (A_n)_x)$. Now, from Section 3, we know that $d$ is the sum of some $L_{\alpha_{i_k}}:C\rightarrow C$, with $L_{\alpha_{i_k}}=\alpha_{i_k}\times\,\_\,$. Hence $t\circ d$ will be the sum of some $t(\alpha_{i_k}\times\,\_\,)$. Since $t$ is an arrow of $C$-modules, we have $t(\alpha_{i_k}\times\,\_\,)=\alpha_{i_k}\times t(\,\_\,)$. But, by the previous proposition and Lemma \ref{generado}, this equal to 0. Hence $t\circ d=0$ and, since this holds for any $x$, we have $\Phi=0$. This means that, for $1\leq p\leq n$, we have $\mathcal{H}H^p(A_n,\, A_n)=\mathcal{H}_{C}(K_p^n(C), A_n)$. \par
Now,  $K_p^n(C)=\mathop{\bigoplus}_{\substack{S\subseteq T\\ |S|=p}}\limits C$, with $T=\{1,\ldots ,n\}$, so, by Section 3 of \cite{primero}, we have 
\begin{displaymath}
\mathcal{H}_{C}\Bigg(\bigoplus_{\substack{S\subseteq T\\ |S|=p}} C,\, A_n\Bigg)\cong \bigoplus_{\substack{S\subseteq T\\ |S|=p}}\mathcal{H}_C(C,\, A_n)\cong\bigoplus_{\substack{S\subseteq T\\ |S|=p}} A_n=K_p^n(A_n). 
\end{displaymath}
Finally, by Section 6 of \cite{primero}, we know that $\mathcal{H}H^0(A_n,\, A_n)=A_n=K_{0}^n(A_n)$.
\end{proof}

In the next proposition we make use of some results concerning the tensor product over a monoid in $\mathcal{F}$,  which can be found in the Appendix.

\begin{prop}\label{hilbert} (Relative Hilbert syzygy Theorem) Let $A$ be a  tensor idempotent commutative monoid in $\CF$ and $t_1,\ldots, t_n$ be variables. Then, any $A[t_1,\ldots, t_n]$-module $M$ admits a finite $\CF$-split resolution
$$0\to L_n\to L_{n-1}\to\ldots\to L_1\to M\to 0$$
by $\CF$-projective $A[t_1,\ldots, t_n]$-modules.
\end{prop}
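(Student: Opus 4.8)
The plan is to argue by induction on the number $n$ of variables, using the Koszul resolution of Theorem~\ref{koszul An} as the engine that relates resolutions over $A_n$ to resolutions over $A_{n-1}$. For the base case $n=0$, the statement reduces to the fact recalled just before Theorem~\ref{koszul An}: every $A$-module admits an $\CF$-split resolution by $\CF$-projective objects, and this resolution has length $0$ in the sense that $L_0\to M\to 0$ can be taken with $L_0$ already $\CF$-projective (here we only need existence, the length bound being vacuous). Actually it is cleaner to take $n=1$ as the first genuinely interesting case, or simply to phrase the induction as: if the statement holds for $n-1$ variables over any tensor idempotent commutative monoid, then it holds for $n$ variables.

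For the inductive step, I would write $A_n = A_{n-1}[t_n]$ and use the following mechanism. Let $M$ be an $A_n$-module. Viewing $M$ as an $A_{n-1}$-module (Remark~\ref{propiedades}(3)), the induction hypothesis gives a finite $\CF$-split resolution $0\to L_{n-1}\to\cdots\to L_1\to M\to 0$ by $\CF$-projective $A_{n-1}$-modules. The idea is then to "extend scalars" from $A_{n-1}$ to $A_n$, but more precisely to resolve $M$ as an $A_n$-module by combining this $A_{n-1}$-resolution with the length-$1$ Koszul resolution of $A_n$ over its enveloping algebra restricted appropriately — i.e. the exact sequence $0\to A_n\to A_n[t_n']\to A_n \to 0$ coming from multiplication by $t_n - t_n'$, which is the $n=1$ instance of Theorem~\ref{koszul An}. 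Concretely, for any $A_n$-module $N$, tensoring the Koszul resolution $K_C(\alpha)_\mu$ with $n=1$ over the enveloping monoid $C = A_n\otimes A_n \cong A_n[t_n']$ yields a two-step $\CF$-split resolution of $N$ by modules of the form $A_n\otimes_{A_{n-1}}(\text{something})$. This is exactly where the appendix results on tensor products over a monoid in $\CF$ enter: one needs that $A_n\otimes_{A_{n-1}} L$ is $\CF$-projective whenever $L$ is, and that $\CF$-splitness is preserved under this base change (which follows because $\CR$ of the base-change functor is again a coproduct of copies of the restriction, using that $A_n$ is free as an $A_{n-1}$-module, being a polynomial ring in $t_n$).

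Assembling: one takes the $A_{n-1}$-projective resolution of $M$, applies the base-change $A_n\otimes_{A_{n-1}}(-)$ to get an $\CF$-split complex of $\CF$-projective $A_n$-modules augmenting onto $A_n\otimes_{A_{n-1}}M$, and then compares this with $M$ itself via the multiplication-type short exact sequence, using a standard horseshoe/mapping-cone argument to splice the length-$1$ resolution of $M$ over $A_n\otimes_{A_{n-1}}A_n$ with the base-changed resolution. The total length goes up by exactly $1$: from $n-1$ to $n$. Throughout, $\CF$-splitness is checked after applying $\CR_{A_n}$, and it reduces to $\CF$-splitness of the input complexes together with the fact that $\CR_{A_n}$ of a cone of $\CF$-split maps is split.

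The main obstacle I anticipate is bookkeeping the $\CF$-split condition through base change and through the cone construction: one must verify carefully that $\CR_{A_n}(A_n\otimes_{A_{n-1}} L)$ is split when $\CR_{A_{n-1}}(L)$ is, which relies on the concrete description (from the appendix) of $\otimes_{A_{n-1}}$ and on $A_n$ being a coproduct of copies of $A_{n-1}$ as an $A_{n-1}$-bimodule — this is where Remark~\ref{propiedades}(2) and the tensor-product lemmas do the real work. The homological-algebra skeleton (induction on $n$, mapping cone to add one to the length) is routine once that compatibility is in hand.
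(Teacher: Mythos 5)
Your strategy --- induction on the number of variables, adding one variable at a time by base change along $A_{n-1}\hookrightarrow A_n$ and splicing with the one-variable Koszul sequence of the diagonal via a mapping cone --- is genuinely different from the paper's argument, which does not induct at the level of this proposition: it takes the full $n$-variable Koszul resolution $K_C(\alpha)_\mu$ of $A_n$ as a $C=A_{2n}$-module from Theorem~\ref{koszul An}, applies ${-}\otimes_{A_n}M$ in a single step, and uses Lemma~\ref{restricciones} together with the identification $C\otimes_{A_n}M\cong A_n\otimes_IM$ to conclude in one stroke. Your inductive step is workable: the compatibilities you flag (base change preserves $\CF$-projectivity and $\CF$-splitness because $A_n$ is a coproduct of copies of $A_{n-1}$ as an $A_{n-1}$-bimodule by Remark~\ref{propiedades}, and the cone of a map between contractible complexes is contractible) are the right ones and do hold, via the same appendix lemma the paper uses. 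One point to watch: the short exact sequence you invoke is not literally the $n=1$ instance of Theorem~\ref{koszul An}, since its base monoid $A_{n-1}$ is \emph{not} tensor idempotent; the identification $A_n\otimes_{A_{n-1}}A_n\cong A_{n-1}[t_n,t_n']$ instead rests on $A_n$ being free over $A_{n-1}$, which is fine but needs to be said.

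The genuine gap is the base case, and it sits exactly where the tensor idempotent hypothesis must do its work. The fact you appeal to (Lemma~4.6 of~\cite{resmac}: every module admits an $\CF$-split resolution by $\CF$-projectives) holds for \emph{every} monoid and produces an a priori infinite resolution; it carries no finiteness and therefore cannot anchor an induction that bounds the length by $n$. What the base case actually requires is that every module $M$ over a tensor idempotent commutative monoid $A$ is itself $\CF$-projective. This is true, but it is a statement to be proved: since $\mu_A:A\otimes A\to A$ is an isomorphism of bimodules, one has $A\otimes_IM\cong(A\otimes_IA)\otimes_AM\cong A\otimes_AM\cong M$, the composite being the action map, so the counit of the adjunction splits and $M$ is $\CF$-projective. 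This is the only place in your whole argument where tensor idempotence is used, so omitting it removes the hypothesis from the proof entirely --- a sure sign something is missing. Starting the induction at $n=1$ instead does not dodge the issue: proving the one-variable case directly amounts to tensoring the one-variable diagonal Koszul resolution with $M$ and checking that $C\otimes_{A_1}M\cong A_1\otimes_IM$ is $\CF$-projective, which again uses $A\otimes A\cong A$ and is essentially the paper's argument specialized to one variable. With the base case repaired (and noting, as you implicitly do, that the induction must keep the base monoid $A$ fixed and vary only the number of variables, since $A_{n-1}$ is not tensor idempotent), your route goes through.
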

\begin{proof} We continue to denote $A_{2n}$ by $C$ and apply  the functor ${-}\otimes_{A_n}M$ to the resolution $K_*:=K_C(\alpha)_\mu$ of $A_n$ of Theorem~\ref{koszul An}.  Since $A_n\otimes_{A_n} M\cong M$ as $A_n$-modules, we get a complex
\[0\to K_n^n(C)\otimes_{A_n}M\to\ldots \to K_1^n(C)\otimes_{A_n}M\to C\otimes_{A_n}M\to M\to 0\tag{$K_*\otimes_{A_n}M$}\]
of $A_n$-modules.\par
 Consider the restriction from $(A_n,A_n)$-bimodules to right $A_n$-modules. We continue to denote this functor by $\mathcal{R}^C_{A_n}$, but taking into account that the $A_n$-modules are viewed as right modules.  By the proof of the previous theorem, we know that $\mathcal{R}^C_{A_n}(K_*)$ is a split exact complex.  Hence it is contractible and, since $\,\_\,\otimes_{A_n}M$ is an additive functor, the complex $(\Rres{C}{A_n}(K_*))\otimes_{A_n}M$ is contractible too. Now, by Lemma \ref{restricciones} of the Appendix, this complex is isomorphic to $\Rres{A_n}{I}(K_*\otimes_{A_n}M)=\mathcal{R}_{A_n}(K_*\otimes_{A_n}M)$. Hence, $\mathcal{R}_{A_n}(K_*\otimes_{A_n}M)$ is a split exact complex in $\mathcal{F}$. Finally, since  $\mathcal{R}_{A_n}$ is faithful and exact, it follows that the complex $(K_*\otimes_{A_n}M)$ is exact in $A_n\hbox{-Mod}$. Hence it is an $\CF$-split resolution of $M$.\medskip\par

Now, each $K_i^n(C)$, for $i>0$, is a direct sum of copies of the $(C,C)$-bimodule $C$. Also, $C\cong A_n\otimes A_n=A_n\otimes_{I}A_n$ (we notice in the Appendix that the tensor product over $I$ is the usual tensor product of $\mathcal{F}$), so
\[C\otimes_{A_n}M\cong (A_n\otimes_{I}A_n)\otimes_{A_n}M\cong  A_n\otimes_{I}(A_n\otimes_{A_n}M)\cong A_n\otimes_I M.\]
This shows in particular that $C\otimes_{A_n}M$ is $\CF$-projective, by Lemma~4.3 of~\cite{resmac}, since $A_n\otimes_IM$ is in the image of the left adjoint to the restriction functor from $A_n$-modules to $\CF$. Moreover,  $K_i^n(C)\otimes_{A_n}M\cong K_i^n(A_n)\otimes_I M$, so all the terms $K_i^n(C)\otimes_{A_n}M$, for $i>0$, are also $\CF$-projective. Hence, the complex $K_*\otimes_{A_n}M$, isomorphic to
\[0\to K_n^n(A_n)\otimes_{I}M\to\ldots \to K_1^n(A_n)\otimes_{I}M\to A_n\otimes_{I}M\to M\to 0,\]
is an $\CF$-split resolution of $M$ by $\CF$-projective $A_n$-modules. This completes the proof.
\end{proof}

\section{Appendix}

Let $A$ be a monoid in $\mathcal{F}$, $M$ a right A-module with action $\sigma_r:M\otimes A\rightarrow M$ and $N$ a left $A$-module with action $\sigma_l:A\otimes N\rightarrow N$. The \textit{tensor product over $A$ of $M$ and $N$}, denoted by $M\otimes_AN$,  is the coequalizer of the arrows given by the actions (see Section VII of \cite{maclane}),
\[
\xymatrix{
M\otimes A\otimes N\ar@<0.5ex>[rr]^-{\sigma_r\otimes N}\ar@<-0.5ex>[rr]_-{M\otimes \sigma_l}&& M\otimes N\ar[r] & M\otimes_AN,
}
\]
which exists since $\mathcal{F}$ is bicomplete. This construction defines an additive functor
\begin{displaymath}
\textrm{Mod-}A\, \times A\textrm{-Mod}\rightarrow \mathcal{F},\quad (M,\, N)\mapsto M\otimes_A N.
\end{displaymath}

Notice that, for $F$ and $T$ objects in $\mathcal{F}$, the tensor product over $I$, $F\otimes_{I}T$, is the usual tensor product of $\mathcal{F}$. 

\begin{rem}
Let $A$, $C$ and $D$ be monoids in $\mathcal{F}$, let $M$ be a $(C,\, A)$-bimodule and $N$ be an $(A,\, D)$-bimodule. The following facts are easily deduced from the definition.
\begin{enumerate}
\item If $Q$ is a $D$-module, then 
\begin{displaymath}
(M\otimes_AN)\otimes_D Q\cong M\otimes_A(N\otimes_DQ)
\end{displaymath}
and the isomorphism is natural in the three variables.

\item  $M\otimes_AN$ is a $(C,\, D)$-bimodule.

\item We have $M\otimes_AA\cong M$ and $A\otimes_AN\cong N$ in $\mathcal{F}$ and these are  natural isomorphisms of $C$-modules and right $D$-modules, respectively.
\end{enumerate}
\end{rem}

\begin{lema}
\label{restricciones}
Let $A$, $D$ and $E$ be monoids in $\mathcal{F}$, $M$ be a $(D,\, A)$-bimodule and $N$ be an $(A,\, E)$-bimodule. Let $\mathcal{R}^{D,\, A}_A$ denote the restriction functor $\mathcal{R}^{D\otimes A^{op}}_{I\otimes A^{op}}$ from $(D,A)$-bimodules to right $A$-modules and $\Rres{D,\,E}{E}$ be defined in a similar  way. Then, there is an isomorphism
$$\Rres{D,\,E}{E}(M\otimes_A N)\cong (\Rres{D,\,A}{A}M)\otimes_AN$$
of right $E$-modules, which is functorial in $M$ and $N$. 
\end{lema}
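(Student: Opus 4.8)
The plan is to prove Lemma~\ref{restricciones} by unwinding both sides to their defining coequalizer diagrams and showing they coincide. Recall that $M\otimes_A N$ is defined as the coequalizer in $\mathcal{F}$ of the pair $\sigma_r\otimes N,\ M\otimes\sigma_l:M\otimes A\otimes N\rightrightarrows M\otimes N$, and that by assertion~2 of the preceding remark it carries a $(D,E)$-bimodule structure. Similarly, regarding $M$ as a right $A$-module via $\mathcal{R}^{D,A}_A$, the object $(\mathcal{R}^{D,A}_A M)\otimes_A N$ is the coequalizer of the \emph{same} pair of arrows, but now computed without remembering the left $D$-action on $M$ and the right $E$-action on $N$; it is a priori just an object of $\mathcal{F}$, and in fact inherits a right $E$-module structure from the right $E$-action on $N$.

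First I would observe that the restriction functors $\mathcal{R}^{D,E}_E$ and $\mathcal{R}^{D,A}_A$ are forgetful, hence exact and, crucially, they commute with the colimits computed in $\mathcal{F}$: a coequalizer of a diagram of $(D,A)$-bimodules, computed in the category of $(D,A)$-bimodules, has underlying right $A$-module equal to the coequalizer of the underlying diagram of right $A$-modules (both are computed pointwise in $R\text{-Mod}$, where this is standard). Concretely, the universal arrow $M\otimes N\to M\otimes_A N$ is, after applying $\mathcal{R}^{D,E}_E$, a coequalizer of $\sigma_r\otimes N$ and $M\otimes\sigma_l$ in $\mathrm{Mod}\text{-}E$; and $(\mathcal{R}^{D,A}_A M)\otimes_A N$ is, by definition, the coequalizer of the very same two arrows, with its right $E$-module structure coming from $N$. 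Since a coequalizer is unique up to unique isomorphism, we get a canonical isomorphism $\mathcal{R}^{D,E}_E(M\otimes_A N)\cong(\mathcal{R}^{D,A}_A M)\otimes_A N$ of right $E$-modules.

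Then I would check functoriality in $M$ and $N$: given morphisms $f:M\to M'$ of $(D,A)$-bimodules and $g:N\to N'$ of $(A,E)$-bimodules, the induced map $f\otimes_A g$ on tensor products over $A$ is, on underlying right $E$-modules, exactly the map induced on the corresponding coequalizers by $f\otimes g$, which is the same whether one remembers the $D$- and $E$-actions or only the right $A$- and right $E$-actions. So the canonical isomorphism above assembles into a natural isomorphism of bifunctors. The only mildly delicate point—and the one I expect to be the main (though modest) obstacle—is the bookkeeping of module structures: one must verify that the right $E$-action transported across the coequalizer isomorphism on the left is literally the one used to define the right $E$-action on $(\mathcal{R}^{D,A}_A M)\otimes_A N$, i.e. that the isomorphism is $E$-equivariant and not merely an isomorphism in $\mathcal{F}$. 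This follows because both $E$-actions are induced, via the universal property of the coequalizer, by the single morphism $(M\otimes N)\otimes E\to M\otimes_A N$ coming from $M\otimes(\sigma_r^{N}):M\otimes N\otimes E\to M\otimes N$ (the right $E$-action on $N$), so they agree by uniqueness in the universal property. Everything else is a routine diagram chase using the remark preceding the lemma.
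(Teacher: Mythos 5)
Your proposal is correct and follows essentially the same route as the paper: present $M\otimes_A N$ by its defining coequalizer (equivalently, the right-exact sequence $M\otimes A\otimes N\to M\otimes N\to M\otimes_A N\to 0$), apply the exact forgetful restriction functor, identify the first two terms with $(\Rres{D,\,A}{A}M)\otimes A\otimes N$ and $(\Rres{D,\,A}{A}M)\otimes N$, and conclude by uniqueness of the coequalizer. Your extra care about the $E$-module structure and functoriality is a sound elaboration of what the paper leaves implicit, not a different argument.
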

\begin{proof}
Indeed, by construction, we have an exact sequence
$$M\otimes A\otimes N\stackrel{\delta}{\to}M\otimes N\to M\otimes_AN\to 0$$
of $(D,E)$-bimodules, where $\delta=\sigma_r\otimes N-M\otimes\sigma_l$. Applying the functor $\Rres{D,\,E}{E}$ to this sequence gives the exact sequence
$$\Rres{D,\,E}{E}(M\otimes A\otimes N)\to\Rres{D,\,E}{E}(M\otimes N)\to \Rres{D,\,E}{E}(M\otimes_AN)\to 0.$$
Moreover, there are canonical isomorphisms of right $E$-modules
\begin{align*}
\Rres{D,\,E}{E}(M\otimes A\otimes N)&\cong (\Rres{D,\,A}{A}M)\otimes A\otimes N,\\
\Rres{D,\,E}{E}(M\otimes N)&\cong (\Rres{D,\,A}{A}M)\otimes N,
\end{align*}
so we get the exact sequence
$$ (\Rres{D,\,A}{A}M)\otimes A\otimes N\to(\Rres{D,\,A}{A}M)\otimes N\to \Rres{D,\,E}{E}(M\otimes_AN)\to 0.$$
Thus $\Rres{D,\,E}{E}(M\otimes_A N)\cong (\Rres{D,\,A}{A}M)\otimes_AN$, as claimed.
\end{proof}


\centerline{\rule{5ex}{.1ex}}
\begin{flushleft}
Serge Bouc, CNRS-LAMFA, Universit\'e de Picardie, 33 rue St Leu, 80039, Amiens, France.\\
{\tt serge.bouc@u-picardie.fr}\vspace{1ex}\\
Nadia Romero, DEMAT, UGTO, Jalisco s/n, Mineral de Valenciana, 36240, Guanajuato, Gto., Mexico.\\
{\tt nadia.romero@ugto.mx}
\end{flushleft}

\end{document}